\documentclass{article}

\usepackage[
  lang=american,
]{ems-journal}
\usepackage{mathtools}
\usepackage{mathrsfs}
\DeclareFontFamily{U}{rsfs}{\skewchar\font127 }
\DeclareFontShape{U}{rsfs}{m}{n}{%
  <-6.5> rsfs5 <6.5-8> rsfs7 <8-> rsfs10}{}
\usepackage{microtype}
\usepackage{comment}
\usepackage{needspace}
\usepackage{tikz}
\usetikzlibrary{arrows.meta,calc,decorations.pathreplacing}

\definecolor{Bfill}{RGB}{100,170,222}
\definecolor{Bedge}{RGB}{49,111,157}
\definecolor{ABfill}{RGB}{239,156,66}
\definecolor{ABedge}{RGB}{190,104,28}

\newcommand{\dd}{\mathop{}\!\mathrm{d}}
\newcommand{\one}{\mathbf 1}
\DeclareMathOperator{\Id}{Id}
\DeclareMathOperator{\dist}{dist}
\DeclareMathOperator*{\esssup}{ess\,sup}
\newcommand{\R}{\mathbb{R}}

\theoremstyle{plain}
\newtheorem{theorem}{Theorem}[section]
\newtheorem{proposition}[theorem]{Proposition}
\newtheorem{lemma}[theorem]{Lemma}
\newtheorem{corollary}[theorem]{Corollary}

\theoremstyle{definition}
\newtheorem{remark}[theorem]{Remark}

\numberwithin{equation}{section}
\makeatletter
\ifdefined\HyOrg@maketitle
  \let\ems@titlewithdedication\HyOrg@maketitle
\else
  \let\ems@titlewithdedication\maketitle
\fi
\xpatchcmd{\ems@titlewithdedication}
  {\ifx\ems@dedication\@undefined\else
    \addvspace\ems@grid
    {\ems@titleblockalignment\itshape\ems@dedication\par}%
   \fi}
  {}
  {}
  {\PackageError{Final-EMS}{Could not remove the original dedication block}%
    {Use the ems-journal.sty supplied with this project.}}
\xpatchcmd{\ems@titlewithdedication}
  {\endgroup\vskip5mm}
  {\ifx\ems@dedication\@undefined\else
     \addvspace\ems@grid
     {\itshape\ems@dedication\par}%
   \fi
   \endgroup\vskip5mm}
  {}
  {\PackageError{Final-EMS}{Could not place the dedication below the authors}%
    {Use the ems-journal.sty supplied with this project.}}
\ifdefined\HyOrg@maketitle
  \let\HyOrg@maketitle\ems@titlewithdedication
\else
  \let\maketitle\ems@titlewithdedication
\fi
\makeatother

\begin{document}

\title{On $L^p$ norms of maximal operators}
\titlemark{On $L^p$ norms of maximal operators}

\emsauthor{1}{
  \givenname{Valentina}
  \surname{Ciccone}}{V.~Ciccone}
\Emsaffil{1}{
  \organisation{Institute of Mathematics, Polish Academy of Sciences}
  \address{{\'S}niadeckich 8}
  \zip{00-656}
  \city{Warszawa}
  \country{Poland}
  \affemail{vciccone@impan.pl}}

\emsauthor{2}{
  \givenname{Mahdi}
  \surname{Hormozi}}{M.~Hormozi}
\Emsaffil{2}{
  \organisation{Beijing Institute of Mathematical Sciences and Applications}
  \zip{101408}
  \city{Beijing}
  \country{China}
  \affemail{hormozi@bimsa.cn}}

\emsauthor{3}{
  \givenname{Jos{\'e}}
  \surname{Madrid}}{J.~Madrid}
\Emsaffil{3}{
  \department{Department of Mathematics}
  \organisation{Virginia Tech}
  \address{225 Stanger Street}
  \zip{24061-1026}
  \city{Blacksburg, VA}
  \country{USA}
  \affemail{josemadrid@vt.edu}}

\emsauthor{4}{
  \givenname{Jakub}
  \surname{Niksiński}}{J.~Niksiński}
\Emsaffil{4}{
  \department{Department of Mathematics}
  \organisation{Rutgers University}
  \address{Hill Center -- Busch Campus, 110 Frelinghuysen Road}
  \zip{08854-8019}
  \city{Piscataway, NJ}
  \country{USA}
  \affemail{jakub.niksinski.math@gmail.com}}

\emsauthor{5}{
  \givenname{B{\l}a{\.z}ej}
  \surname{Wr{\'o}bel}}{B.~Wr{\'o}bel}
\Emsaffil{5}{
  \organisation{1}{Institute of Mathematics, Polish Academy of Sciences}
  \address{1}{{\'S}niadeckich 8}
  \zip{1}{00-656}
  \city{1}{Warszawa}
  \country{1}{Poland}
  \department{2}{Institute of Mathematics}
  \organisation{2}{University of Wroc{\l}aw}
  \address{2}{Plac Grunwaldzki 2}
  \zip{2}{50-384}
  \city{2}{Wroc{\l}aw}
  \country{2}{Poland}
  \affemail{2}{bwrobel@impan.pl}}

\makeatletter
\preto\ems@authlist{\@namedef{@sep4}{,\\}}
\preto\ems@affillist{\Needspace{24\baselineskip}}
\makeatother

\dedication{Dedicated to Leonard Gross}
\classification[47D07, 60G42]{42B25}
\keywords{maximal operator, symmetric Markov semigroup, heat semigroup,
 Ornstein--Uhlenbeck semigroup, Stein's maximal
  inequality, centered Hardy--Littlewood maximal operator, sharp constant}

\hypersetup{
  pdftitle={On L\textasciicircum p norms of maximal operators},
  pdfauthor={Valentina Ciccone, Mahdi Hormozi, José Madrid, Jakub Niksiński, Błażej Wróbel},
  pdfsubject={L\textasciicircum p norms of maximal functions associated to semigroups and convolution operators},
  pdfkeywords={maximal operator, symmetric Markov semigroup, heat semigroup,
    discrete Gaussian, Ornstein--Uhlenbeck semigroup, Stein's maximal
    inequality, centered Hardy--Littlewood maximal operator, sharp constant,
    near-extremizer}
}

\begin{abstract}
We study the exact $L^p$ norms for a number of maximal operators, which are prominent in harmonic analysis, semigroup theory, and probability. We prove that a strongly continuous positive symmetric sub-Markovian semigroup $(T_t)_{t\ge0}$ on a nonatomic $\sigma$-finite measure space $(X,\mathcal F,\mu)$ has maximal $L^p$ norm $p'=p/(p-1)$ for every $1<p<\infty$ whenever $f\mapsto T_{t_0}(\one_Ef)$ is compact on $L^2(\mu)$ for some $t_0>0$ and some measurable set $E$ of positive measure. The lower bound follows by approximating finite dyadic conditional expectations at suitable semigroup times; the upper bound is Stein's maximal inequality in its sub-Markovian form.

Important examples of such maximal operators we discuss include maximal functions of various semigroups (heat, Poisson, Ornstein--Uhlenbeck, Schr\"odinger, Dunkl, Laguerre, Jacobi) as well as the centered Hardy-Littlewood maximal operator on $\mathbb{R}.$
In particular, the heat and Poisson maximal operators on every
complete connected Riemannian manifold of positive dimension without
boundary have exact $L^p$ norm $p'$, with no curvature or stochastic
completeness assumptions.

Combining our approach with previous work of the first and last authors we also prove that $p'$ is the limit, as $d\to \infty,$ of the $L^p$ norms $B(p,d)$ of the centered Hardy-Littlewood maximal operators on $\R^d.$ On the other hand, we show that the norm of this maximal operator is strictly larger than $p',$ in dimensions $d\ge2.$

\end{abstract}

\makeatletter
\let\ps@titlepage\ps@empty
\makeatother
\maketitle

\section{Introduction}
\label{sec:introduction}
A central theme of various fields of analysis, including harmonic analysis, geometric analysis, semigroup theory, and probability theory, is the study of
inequalities for operators. Prominent examples of these operators are the maximal functions. Three recurring questions concern
the range of exponents for which strong- or weak-type estimates hold,
the dependence of the bounds on dimension and other parameters, and
the determination of optimal constants. 

One of the most important maximal functions in probability theory is the one associated to an increasing filtration. Consider a probability space
$(\Omega,\mathcal G,\mathbb P)$, let
$\mathcal G_0\subseteq\cdots\subseteq\mathcal G_N\subseteq \cdots \subseteq \mathcal G$ be an increasing filtration of $\mathcal G$, and define the conditional expectation 
\[\mathcal E_n f= \mathbb E(f\mid \mathcal G_n).\] Then $(\mathcal E_n f)_{n\ge0}$ is a martingale and Doob's maximal inequality implies that, for all $p\in (1,\infty)$,
\[
\|\sup_{n\ge 0}\left|\mathcal E_n f\right|\|_{L^p}
\le
p'\|f\|_{L^p}.
\]
Furthermore,  the constant $p'$ is optimal; see
\cite{Burkholder1986,Burkholder1991} for general martingales
and \cite{Wang1991} for dyadic martingales.

Connections to martingales and Doob's maximal inequality have been extensively exploited in estimating maximal functions in other fields of analysis, including harmonic analysis and semigroup theory. A notable example is Rota's construction, which implies that the norm of a maximal function corresponding to a strongly continuous positive symmetric Markov semigroup does not exceed $p'.$  The main purpose of our work is to develop an easy-to-check local compactness condition for semigroups that implies that the norm is in fact equal to $p'.$  The mechanism behind our condition is the approximation of finite dyadic conditional expectations by the semigroup at suitably chosen times.

Our local compactness result is the following theorem.
\begin{theorem}\label{thm:local-compactness}
Let $(T_t)_{t\ge0}$ be a strongly continuous positive symmetric sub-Markovian
semigroup on a nonatomic $\sigma$-finite measure space $(X,\mathcal F,\mu)$.
If $f\mapsto T_{t_0}(\one_Ef)$ is compact on $L^2(\mu)$ for some $t_0>0$
and $E\in\mathcal F$ of positive measure, then
\[
 \|T^*\|_{L^p(\mu)\to L^p(\mu)}=p'
 \qquad(1<p<\infty).
\]
\end{theorem}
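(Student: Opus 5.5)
The upper bound $\|T^*\|_{L^p\to L^p}\le p'$ is Stein's maximal inequality in its sub-Markovian form, coming from Rota's dilation and Doob's inequality. We therefore only need the lower bound. The plan has three steps: show that $T_t$ acting on functions supported in $E$ behaves, at suitable times, like a finite-rank averaging operator; use that averaging structure to approximate a finite dyadic martingale; and then import the sharpness of $p'$ for dyadic martingales (Wang).

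\textbf{Step 1 (spectral structure on $E$).} Write $P_E f=\one_E f$ and set $K=P_E T_{t_0}P_E$. Since $T_{t_0}P_E$ is compact, $K$ is a compact, self-adjoint, positive operator on $L^2(E)$. Write $T_t=e^{-tA}$ with $A\ge0$ self-adjoint. Compactness of $T_{t_0}P_E$ means that $P_E\one_{[0,\lambda]}(A)$ behaves like a finite-rank object. I expect the key consequence to be the following. There is a finite-dimensional subspace of functions supported in $E$ on which, for large $t$, $T_t$ acts nearly as a fixed positive contraction $Q$. More precisely, $P_E T_t P_E\to P_E\Pi P_E$ as $t\to\infty$, where $\Pi$ is the spectral projection onto $\ker A$. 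The difficulty is that $\ker A$ may be trivial, in which case $T_t\to0$ and this limit degenerates.

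To handle this I would rescale. Take $0<s<\infty$ and consider the family $\{P_E T_s P_E\}_s$. Because the space is nonatomic and $E$ can be partitioned into many small pieces, I would look for times $t_1>t_2>\dots>t_N$ at which $T_{t_k}$, tested against indicator functions of the dyadic atoms $E_{k,j}$ of a suitable partition of $E$, is close in $L^p$ to the averaging $\mathcal E_k$ onto the $k$-th level. For small times, strong continuity gives $T_t f\approx f$, which recovers the finest level. For the coarse levels I would use compactness: the eigenfunctions $\phi_i$ of $K$ are finitely many above any threshold, and I would choose the partition of $E$ adapted to the level sets of the first few eigenfunctions, so that $T_t$ at intermediate times averages over blocks.

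\textbf{Step 2 (constructing the extremizing test functions).} Fix $N$ and Wang's near-extremal dyadic example $f$ on $[0,1]$ with $\|\sup_{n\le N}|\mathcal E_n f|\|_p\ge(p'-\varepsilon)\|f\|_p$. Transport it to $E'\subset E$ by a measure isomorphism, which exists because the space is nonatomic. Then choose the times $t_n$ so that $\|T_{t_n}f-\mathcal E_n f\|_{L^p(E')}<\varepsilon\|f\|_p$. We get
\[
\|T^*f\|_p\ge\Bigl\|\max_{n\le N}|T_{t_n}f|\Bigr\|_p\ge\Bigl\|\max_{n\le N}|\mathcal E_nf|\Bigr\|_p-N\varepsilon\|f\|_p .
\]
Letting $\varepsilon\to0$ and $N\to\infty$ gives the lower bound $p'$.

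\textbf{Main obstacle.} The hard step is realizing conditional expectations by the semigroup. Only the finest level comes for free, from strong continuity, and the top level is not automatic. The flexibility I would exploit is that we may choose the filtration itself, that is, which sets are atoms, after seeing the semigroup. Wang's extremal configuration only needs a dyadic-type tree structure, and a ``nearly invariant'' structure for the kernel suffices.

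My first attempt would be a localization argument. Pick a nonzero eigenfunction $\phi$ of $K$. Compactness forces $T_t$ restricted to a small neighborhood to mix mass at a definite rate. Then restrict to very small $E'$ and very short times, where $T_t$ averages over sets of comparable small measure, so the successive coarsenings can be approximated by increasing times. If exact approximation fails, I would fall back on a weaker inequality: it suffices that $T_{t_n}f\ge(1-\varepsilon)\mathcal E_nf$ pointwise on most of the atom where the maximum is attained. Positivity of $T_t$ together with the structure of Wang's example, which is positive and concentrated, might give this more easily.
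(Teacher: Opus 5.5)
Your outer architecture agrees with the paper's. You use Stein's inequality for the upper bound. For the lower bound you approximate a finite-depth dyadic martingale by the semigroup at finitely many times and use that the finite dyadic maximal constants increase to $p'$. The gap is the step that carries all the content: producing times $t_n$ together with a copy of the dyadic filtration inside $E$ such that $T_{t_n}\approx\mathcal E_n$. You never establish this, and the mechanisms you suggest do not work.

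\begin{itemize}
\item The limit $P_ET_tP_E\to P_E\Pi P_E$ as $t\to\infty$ is of no use; for the heat semigroup on $\mathbb R^d$ one has $\Pi=0$. The coarse levels do not come from large times.
\item Nothing makes $T_t$ resemble a conditional expectation onto a partition adapted to level sets of eigenfunctions of $K$. For $T_t$ to act like a conditional expectation onto a partition $\{B_i\}$ you need both $T_t\one_{B_i}\approx\one_{B_i}$ and $T_tg\approx0$ for the functions $g$ generating the next level. That scale separation has to be built into the \emph{filtration}, relative to the chosen times.
\item The order in your Step~2 is therefore wrong. If you first transport the extremal example by a fixed measure isomorphism and only then look for times, suitable times need not exist. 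Take the heat semigroup on $\mathbb R$ with the standard dyadic filtration on $[0,1]$. Every nonnegative $f$ supported in $[0,1]$ that is not a.e.\ constant there satisfies $\inf_{t>0}\|T_tf-\mathcal E_0f\|_p>0$. Indeed $T_tf\to f$ as $t\to0$, $T_tf\to0$ as $t\to\infty$, and $T_tf>0$ everywhere while $\mathcal E_0f$ vanishes off $[0,1]$.
\end{itemize}

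The idea you are missing is a soft consequence of compactness. Since $f\mapsto T_{t_0}(\one_Ef)$ is compact, it sends bounded weakly null sequences supported in $E$ to norm-null sequences. After shrinking $E$ and normalizing $\mu(E)=1$, take independent Rademacher functions $r_m$ on $E$. For any fixed Walsh product $w_S=\one_E\prod_{j\in S}r_{k_j}$, the functions $w_Sr_m$ (for $m$ large) are orthonormal, hence weakly null, so $\|T_{t_0}(w_Sr_m)\|_2\to0$. The spectral bound $\|T_sv\|_2\le\|T_{t_0}v\|_2^{s/t_0}\|v\|_2^{1-s/t_0}$ transfers this to every $s\in(0,t_0)$; this matters because all the times must be small. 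Interpolating with $\|T_sv\|_1,\|T_sv\|_\infty\le1$ then gives the decay in $L^p$.

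Times and frequencies are chosen alternately:
\begin{itemize}
\item pick $s_0$ small so that $T_{s_0}\one_E\approx\one_E$;
\item pick $k_1$ so that $T_{s_0}(\one_Er_{k_1})\approx0$;
\item pick $s_1<s_0$ so that $T_{s_1}$ nearly fixes $\one_E$ and $\one_Er_{k_1}$;
\item pick $k_2$ so that all new products $w_{S\cup\{2\}}$ are nearly annihilated at $s_0$ and $s_1$;
\end{itemize}
and so on. This gives $T_{s_n}w_S\approx JE_nW_S$ for every Walsh function $W_S$, where $JF=\one_EF(r_{k_1},\dots,r_{k_N})$ is an isometric lattice embedding. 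Hence $\|T_{s_n}JF-JE_nF\|_p\le2^N\varepsilon\|F\|_p$ for all $F$ and all $n$.

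So the coarse levels come from small times combined with much faster oscillation of the finer levels. Your remark that the filtration may be chosen after seeing the semigroup points in this direction. Without the weak-null-to-norm-null mechanism and the interleaved choice of times and oscillation scales, however, the lower bound is not proved.
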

\noindent In other words, Theorem \ref{thm:local-compactness} asserts that the norm equals $p'$ whenever the spatially localized semigroup is a compact operator at some time $t_0$. 

This result implies that the norms of the maximal functions corresponding to the classical heat and Poisson, Jacobi, Hermite, Laguerre, Ornstein--Uhlenbeck, and Dunkl semigroups, as well as their subordinates associated with unbounded Bernstein functions $\Phi$ satisfying $\Phi(0)=0$, are all equal to $p'$; see Corollary \ref{cor: semigroup examples} and Corollary \ref{cor:bernstein-maximal-norms}.
As an application to geometric analysis, our theorem gives
the exact $L^p$ norm $p'$ for the heat and Poisson maximal operators
on every complete connected Riemannian manifold of positive
dimension without boundary, for $1<p<\infty$, without curvature
or stochastic completeness assumptions.

We now discuss corollaries of Theorem \ref{thm:local-compactness} for the values of the norms of the centered Hardy-Littlewood maximal function of the balls $\mathscr B_*^d$ in $\R^d.$

\begin{corollary}\label{cor:ball-lower-bounds}
For every $1<p<\infty$ and every $d\geq1$,
\begin{equation}\label{eq:intro-continuous-balls}
\begin{aligned}
\|\mathscr B_*^d\|_{L^p(\mathbb R^d)\to L^p(\mathbb R^d)}\ge p'.
\end{aligned}
\end{equation}
\end{corollary}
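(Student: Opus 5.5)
The plan is to dominate the heat maximal operator on $\mathbb R^d$ pointwise by $\mathscr B_*^d$, and then use Theorem~\ref{thm:local-compactness} to get $\|T^*\|_{L^p\to L^p}=p'$ for the heat semigroup. Write $T_t f=h_t*f$ with $h_t(x)=(4\pi t)^{-d/2}e^{-|x|^2/4t}$ and $T^*f=\sup_{t>0}|T_tf|$. Once we know
\[
T^*f\le \mathscr B_*^d|f| \quad\text{pointwise},
\]
we obtain $p'=\|T^*\|_{L^p\to L^p}\le \|\mathscr B_*^d\|_{L^p\to L^p}$, which is \eqref{eq:intro-continuous-balls}.

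\emph{Step 1: the hypotheses of Theorem~\ref{thm:local-compactness}.} Lebesgue measure on $\mathbb R^d$ is nonatomic and $\sigma$-finite. The heat semigroup is strongly continuous on $L^2$, positive and symmetric, and Markovian since $h_t\ge0$ and $\int h_t=1$. For compactness, take $t_0=1$ and $E$ the unit ball. The operator $f\mapsto T_1(\one_Ef)$ has kernel $h_1(x-y)\one_E(y)$, and
\[
\iint h_1(x-y)^2\one_E(y)\dd x\dd y=|E|\,\|h_1\|_{L^2}^2<\infty .
\]
So the operator is Hilbert--Schmidt, hence compact on $L^2$.

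\emph{Step 2: layer-cake decomposition of the kernel.} Since $h_t$ is radial and decreasing, we can write $h_t(x)=\int_0^\infty \one_{\{h_t>s\}}(x)\dd s$. Each superlevel set $\{h_t>s\}$ is a centered ball $B(0,r(s))$. Therefore
\[
h_t=\int_0^\infty \frac{\one_{B(0,r)}}{|B(0,r)|}\dd\nu_t(r)
\]
for a positive measure $\nu_t$ on $(0,\infty)$. Its total mass equals $\int h_t=1$, so $\nu_t$ is a probability measure. This gives
\[
|T_tf(x)|\le\int_0^\infty \frac{1}{|B(x,r)|}\int_{B(x,r)}|f|\dd\nu_t(r)\le \mathscr B_*^d|f|(x).
\]
Taking the supremum over $t>0$ yields the pointwise bound. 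The only care needed is measurability and Fubini in the layer-cake step, which is routine for nonnegative integrands.

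The only genuine input is Theorem~\ref{thm:local-compactness}; I expect no real obstacle. The point to double-check is the compactness verification, which the Hilbert--Schmidt computation settles.
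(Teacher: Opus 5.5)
Your proposal is correct and follows essentially the same route as the paper. The heat maximal operator has norm $p'$ by Theorem~\ref{thm:local-compactness}, via a Hilbert--Schmidt check for $f\mapsto T_{t_0}(\one_E f)$, and it is pointwise dominated by $\mathscr B_*^d$ once the Gaussian is written as a probability average of normalized ball indicators. Your layer-cake decomposition is the same identity the paper obtains by polar coordinates and integration by parts, and your direct computation $|E|\,\|h_1\|_{L^2}^2<\infty$ replaces the paper's detour through the Dunkl case with $\kappa=0$.
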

\begin{corollary}
\label{cor:HL-norm-R}
For every $1<p<\infty$ we have
\[
 \|\mathscr B_*^1\|_{L^p(\mathbb R)\to L^p(\mathbb R)}=p'.
\]
\end{corollary}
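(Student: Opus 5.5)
The lower bound $\|\mathscr B_*^1\|_{L^p\to L^p}\ge p'$ is the case $d=1$ of Corollary \ref{cor:ball-lower-bounds}, so only the upper bound $\|\mathscr B_*^1 f\|_{L^p(\mathbb R)}\le p'\|f\|_{L^p(\mathbb R)}$ needs proof. My plan is to dominate the centered Hardy--Littlewood maximal function on $\mathbb R$ pointwise by a maximal function whose $L^p$ norm is already known to be at most $p'$. The first candidate is the Poisson maximal operator on $\mathbb R$, whose norm is $\le p'$ by Stein's maximal inequality (the upper bound in Theorem \ref{thm:local-compactness}). Unfortunately, the averages $\frac1{2r}\one_{[-r,r]}$ are not convex combinations of Poisson kernels: the Poisson kernels are smooth and strictly positive, while the box kernel has jumps. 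A pointwise domination $\mathscr B^1_*f\le C\sup_t P_t|f|$ with $C=1$ therefore fails, and this route needs a different idea.

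I would instead use the one-sided (uncentered) structure on $\mathbb R$. For $f\ge0$ and $r>0$,
\[
\frac1{2r}\int_{x-r}^{x+r} f = \tfrac12\Big(\frac1r\int_{x-r}^x f\Big)+\tfrac12\Big(\frac1r\int_x^{x+r}f\Big)\le \tfrac12\big(M^-f(x)+M^+f(x)\big),
\]
where $M^\pm$ are the one-sided maximal operators. This gives the pointwise bound $\mathscr B^1_*f\le \tfrac12(M^+f+M^-f)$.

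The key ingredient is the classical sharp one-sided estimate $\|M^+f\|_{L^p(\mathbb R)}\le p'\|f\|_{L^p(\mathbb R)}$. It follows from the Riesz rising-sun lemma, which gives the exact distributional inequality
\[
|\{M^+f>\lambda\}|\le \frac1\lambda\int_{\{M^+f>\lambda\}} f.
\]
The standard layer-cake and Hölder argument then yields the constant $p'$. This is the same argument that proves Doob's inequality, with the rising-sun lemma in place of the stopping time. By symmetry the same bound holds for $M^-$. The triangle inequality then gives
\[
\|\mathscr B^1_*f\|_p\le \tfrac12\big(p'+p'\big)\|f\|_p=p'\|f\|_p.
\]

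The main step to verify is the rising-sun weak-type inequality with the integral taken exactly over the level set, since that is what produces the constant $p'$ rather than something larger. On the open set $\{M^+f>\lambda\}$ we decompose into disjoint maximal intervals $(a_j,b_j)$ with $\int_{a_j}^{b_j}f=\lambda(b_j-a_j)$. Summing gives the identity $\lambda\,|\{M^+f>\lambda\}|=\int_{\{M^+f>\lambda\}}f$, provided the set has finite measure. Finiteness holds whenever $f\in L^p$: first prove the bound for bounded compactly supported $f$, where $\lambda|\{M^+f>\lambda\}|\le\|f\|_1<\infty$ holds, and then pass to general $f\in L^p$ by monotone convergence. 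Finally, the layer-cake computation
\[
\|M^+f\|_p^p\le p\int_0^\infty\lambda^{p-2}\int_{\{M^+f>\lambda\}}f\,\dd\lambda=p'\int f\,(M^+f)^{p-1}\le p'\|f\|_p\|M^+f\|_p^{p-1}
\]
closes the argument, using the a priori finiteness of $\|M^+f\|_p$ for such $f$.
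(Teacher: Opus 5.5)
Your proof is correct and follows the paper's argument: the lower bound is the case $d=1$ of Corollary~\ref{cor:ball-lower-bounds}, and the upper bound comes from the pointwise domination $\mathscr B_*^1 f\le\frac12(M^-f+M^+f)$ combined with the sharp one-sided bound $\|M^\pm\|_{L^p\to L^p}\le p'$. The only difference is that you derive the one-sided bound yourself, via the rising sun lemma and the layer-cake/H\"older argument, whereas the paper cites it as Lemma~\ref{lem:one-sided-sharp}; your Poisson digression is not needed but does no harm.
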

Using the work of the first and last authors \cite{CicconeWrobel}, we can also compute the limit of the norms.
\begin{corollary}\label{cor:ball-limit}
For every $1<p<\infty$ we have
\begin{equation}\label{eq:intro-continuous-balls}
\begin{aligned}
 \lim_{d\to \infty}\|\mathscr B_*^d\|_{L^p(\mathbb R^d)\to L^p(\mathbb R^d)}=p'.
\end{aligned}
\end{equation}
\end{corollary}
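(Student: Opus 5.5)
The plan is to combine a dimension-free lower bound with an asymptotic upper bound. The lower bound is already available: Corollary \ref{cor:ball-lower-bounds} gives $\|\mathscr B_*^d\|_{L^p(\mathbb R^d)\to L^p(\mathbb R^d)}\ge p'$ for every $d\ge1$. Hence
\[
\liminf_{d\to\infty}\|\mathscr B_*^d\|_{L^p(\mathbb R^d)\to L^p(\mathbb R^d)}\ge p'.
\]
It therefore remains to prove
\[
\limsup_{d\to\infty}\|\mathscr B_*^d\|_{L^p(\mathbb R^d)\to L^p(\mathbb R^d)}\le p'.
\]

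For the upper bound I would use the result of \cite{CicconeWrobel}. As I recall it, it controls the $\limsup$ in $d$ of $B(p,d)$ by the $L^p$ norm of a fixed limiting maximal operator. The mechanism is concentration of measure: in high dimension, the normalized uniform measure on the unit ball concentrates near the sphere, so averages over balls of radius $r$ behave like Gaussian-type averages at time $\sim r^2/d$. The limiting object is then the maximal function of a heat (or Ornstein--Uhlenbeck) type semigroup, possibly acting on a tensorized or limiting space. I would state that comparison precisely as an inequality of the form
\[
\limsup_{d\to\infty}\|\mathscr B_*^d\|_{L^p\to L^p}\le \|T^*\|_{L^p\to L^p}
\]
for an explicit symmetric Markov semigroup $(T_t)$.

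Next I would bound $\|T^*\|_{L^p\to L^p}\le p'$. This needs only the upper half of Theorem \ref{thm:local-compactness}, namely Stein's maximal inequality with constant $p'$ for strongly continuous positive symmetric sub-Markovian semigroups. Alternatively, if the cited theorem already phrases the limit through a specific semigroup, I would apply Theorem \ref{thm:local-compactness} (or Corollary \ref{cor: semigroup examples}) to identify its norm as $p'$ exactly. Combining this with the lower bound squeezes the limit to $p'$.

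The main obstacle is extracting from \cite{CicconeWrobel} exactly the form of the asymptotic upper bound needed. The comparison between ball averages and semigroup averages must hold with a multiplicative error $1+o(1)$ as $d\to\infty$, uniformly in the radius. A bound like $C\,p'$ would not suffice. If the cited result only gives pointwise domination up to an error term, I would first try to show that this error tends to zero in $L^p$ operator norm as $d\to\infty$, by splitting radii into small and large regimes relative to $\sqrt d$.
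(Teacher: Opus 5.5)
Your argument is correct and follows the paper's route. The paper cites \cite{CicconeWrobel} for the fact that $\lim_{d\to\infty}\|\mathscr B_*^d\|_{L^p\to L^p}$ exists and equals the high-dimensional limit of the norms of the heat maximal operators $f\mapsto\sup_{t>0}|\mathcal H_t^d f|$ on $L^p(\mathbb R^d)$, each of which equals $p'$ by Corollary~\ref{cor: semigroup examples}(a). That citation already identifies the limit exactly with semigroup norms (the heat semigroups on $\mathbb R^d$, not one fixed limiting semigroup), so no $1+o(1)$ error analysis is needed; your squeeze via Corollary~\ref{cor:ball-lower-bounds} and Stein's bound uses the same two inputs.
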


It turns out that, for $d\ge2$, the lower bound is strict.

\begin{proposition}\label{prop:HL-norm-Rd}
For every integer $d\ge2$ and every $1<p<\infty$,
\[
  \|\mathscr B_*^d\|_{L^p(\mathbb R^d)\to L^p(\mathbb R^d)}>p'.
\]
\end{proposition}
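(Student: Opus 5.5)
The plan is to exhibit, for each fixed $d\ge2$ and $1<p<\infty$, an explicit test function whose ratio $\|\mathscr B_*^d f\|_p/\|f\|_p$ exceeds $p'$ by a definite amount. The lower bound $p'$ of Corollary \ref{cor:ball-lower-bounds} is approached only asymptotically, by functions imitating dyadic Doob extremizers. So I would not perturb those. Instead I would work with scale-invariant test functions, for which $\mathscr B_*^d f/f$ is a fixed function on the sphere that can be computed. Concretely, I would take $f_\alpha(x)=|x|^{-\alpha}\,\omega(x/|x|)$ with $\alpha=d/p$ and an angular profile $\omega\ge0$ to be chosen (the first choices being $\omega\equiv1$ or $\omega$ the indicator of a spherical cap/cone). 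Then I would truncate to an annulus $\{\varepsilon<|x|<R\}$. The standard logarithmic-scale argument shows that
\[
\|\mathscr B_*^d\|_{L^p\to L^p}\ \ge\ \Bigl(\int_{S^{d-1}}\bigl(\mathscr B_*^d f_\alpha\bigr)^p\,d\sigma\Big/\int_{S^{d-1}}\omega^p\,d\sigma\Bigr)^{1/p}.
\]
The reason is that both $\|f\|_p^p$ and $\|\mathscr B_*^d f\|_p^p$ grow like $\log(R/\varepsilon)$ with these leading coefficients, while truncation errors only affect boundedly many scales. The exponent is critical, so I would in fact take $\alpha=d/p-\delta$ and let $\delta\downarrow0$, making the computation rigorous via monotone convergence.

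The key step is then a lower bound for $\mathscr B_*^d f_\alpha(\theta)$, $\theta\in S^{d-1}$, by choosing a good radius $r=r(\theta)$. The guiding heuristic is the proof of Doob's inequality: equality in $p'$ corresponds to a stopping rule that captures the singular mass at the origin with essentially no loss. For a centered ball $B(\theta,r)$ in dimension $d\ge2$, take $r$ slightly larger than $1$, so that the ball swallows a full neighbourhood of the origin. Then the integral $\int_{B(\theta,r)}|y|^{-\alpha}\omega$ behaves like $\frac{|S^{d-1}|}{d-\alpha}\cdot(\text{mass near }0)$, which carries the blow-up factor $\frac1{d-\alpha}=\frac{p'}{d}$. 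The plan is to compare this quantity with the one-dimensional extremal value. One should then check that the contribution from the region of the ball away from the origin gives a strictly positive surplus over the Doob equality case. This surplus should come from the curvature of the sphere $\partial B(\theta,r)$, i.e.\ from the fact that the ball is not a half-space, which is exactly what is absent when $d=1$. I would carry out this expansion explicitly, writing the average as an integral in polar coordinates centered at $0$ and using the formula for the angular measure of $B(\theta,r)\cap\{|y|=\rho\}$.

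I expect the main obstacle to be precisely this comparison. One must show that the ratio obtained from the optimized $r(\theta)$, integrated in $L^p(S^{d-1})$, is strictly larger than $p'$ rather than merely comparable to it. The naive choices ($\omega\equiv1$ with $r=|x|$, or indicator test functions) give constants like $p'^{1/p}$ or $p'2^{-1/p}$, which are below $p'$. So the angular profile $\omega$ has to be tuned (e.g.\ concentrated in a narrow cone, so that balls centered in the cone but of radius comparable to $|x|$ see the full singular mass while $f$ is small at the centre). If the direct calculation is too delicate, the fallback I would try first is a perturbative argument. One would show that equality $\|\mathscr B_*^d\|=p'$ forces near-extremizers to be asymptotically extremal for a Doob inequality along a nested family of sets. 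Then one would check that centered Euclidean balls in $d\ge2$ cannot generate such a nested (martingale) structure, since two centered balls overlapping nontrivially are never nested in the required way. A compactness argument in the scale-invariant (logarithmic) variables would then turn this non-nestedness into a quantitative gain $\delta(p,d)>0$.
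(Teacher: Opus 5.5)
There is a genuine gap. Everything rests on exhibiting a profile $\omega$ with $\int_{S^{d-1}}(\mathscr B_*^d f_\alpha)^p\,d\sigma>(p')^p\int_{S^{d-1}}\omega^p\,d\sigma$, and this inequality is never established; you flag it yourself as the main obstacle. The logarithmic-scale reduction is fine, but the heuristic meant to produce the surplus does not hold up. Since $\int_{B(0,\rho)}f_\alpha=\frac{p'}{d}\rho^{d/p'}\int\omega\,d\sigma$, a ball $B(\theta,r)$ with $r\downarrow1$ contains only $B(0,r-1)$ and gains nothing from the singularity. The factor $p'/d$ merely encodes the Hardy-operator identity that the average of $f_\alpha$ over $B(0,\rho)$ is $p'\bar\omega\rho^{-d/p}$, where $\bar\omega$ is the mean of $\omega$. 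Any ball centered at $x$ that contains $B(0,|x|)$ has at least $2^d$ times its volume.

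Narrow cones do not help either. For $\omega=\one_C$ with $C$ a cap of angular radius $\eta\to0$, the mass near the origin is proportional to $\sigma(C)$, so large balls contribute negligibly. The ratio then tends to that of the ball maximal function of the indicator of a cylinder. For $d=2$ this limiting $p$-th power ratio behaves like $(2/\pi)/(p-1)$ as $p\to1$, which is below $(p')^p\sim1/(p-1)$.

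More structurally, homogeneous test functions only detect what dilation-covariant radius choices $r(x)=|x|\rho(x/|x|)$ can achieve. In $d=1$ this is exactly the Dror--Ganguli--Strichartz eigenfunction method, which yields $c_p<p'$, strictly below the true norm. Your sketch gives no reason why the same restricted class should exceed $p'$ once $d\ge2$. Curvature is also not what separates the cases: in $d=1$ a ball is an interval, not a half-line.

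The fallback is not an argument. No compactness is available: the problem is invariant under translations and dilations, and you do not say what near-extremizers should converge to. Non-nestedness of overlapping centered balls also cannot by itself produce a gain. Centered intervals in $d=1$ are equally non-nested, yet $\|\mathscr B_*^1\|_{L^p\to L^p}=p'$ by Corollary~\ref{cor:HL-norm-R}. Moreover, in every dimension ball averages emulate dyadic martingales to arbitrary accuracy through the heat kernel, which is how Corollary~\ref{cor:ball-lower-bounds} arises.

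What is missing is a concrete mechanism by which balls beat Doob on a fixed proportion of the mass at every scale. It is found precisely in the Doob-type constructions you set aside. The paper uses Minkowski--Hlawka to get a lattice $\Lambda$ of covolume one and $r<\dist(0,\Lambda\setminus\{0\})$ with $|B_r|>2$; this is impossible for $d=1$, where $|B_r|=2r<2$. It then builds the periodic multiscale function $f_{j+1}=b\one_E+a\one_{Q\setminus E}f_j(L\,\cdot)$, modelled on a stopped martingale, with $(1-\delta)a^p=1$ and $(1-\delta)a+\delta b=1$. At a stopped point $x\in E$ the ball $x+B_r$ meets $E+\Lambda$ only in $E$, so its average is $m=a-(a-1)/|B_r|>(a+1)/2>p'b$. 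The recursion gives $\int_Q(\mathscr B_{*,r}^d f_n)^p\ge1+n\delta m^p-C_n/L$ against $\|f_n\|_{L^p(Q)}^p=1+n\delta b^p$, so the ratio of norms tends to $m/b>p'$.
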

\noindent Proposition \ref{prop:HL-norm-Rd} is our only result whose proof was developed with AI tools. We present it in Appendix~\ref{sec:strict-HL-lower-bound}.

\subsection{Historical background}

There is a substantial literature devoted to the determination of exact
$L^p$ norms of classical operators in harmonic analysis. Perhaps the
best known example is Pichorides' \cite{Pichorides1972} theorem for the Hilbert transform,
which gives
\[
 \|H\|_{L^p(\mathbb R)\to L^p(\mathbb R)}
 =
 \cot\frac{\pi}{2p^*},
 \qquad
 p^*=\max\{p,p'\}.
\]
The same constant is the exact norm of each of the Euclidean Riesz
transforms; see, for instance, the work of Iwaniec and Martin \cite{IwaniecMartin1996}. Exact
norms are also known for a number of other closely related singular
integrals.

However, for maximal operators the situation is rather different. Exact strong
$L^p$ norms are known only in relatively few cases, and even for some
of the most classical maximal functions the corresponding problem has
remained open for quite some time.

On the other hand, that $p'$ is the upper bound for maximal functions of strongly continuous positive symmetric Markov semigroups is nowadays a classical result. A fundamental connection
between martingales and positive operators was developed by Rota
\cite{Rota1962}. Then, together with Doob's maximal inequality, it became an important ingredient in Stein's theory of
symmetric diffusion semigroups \cite{Stein1970}. In particular, Stein's
maximal theorem gives a general framework in which martingale methods
can be used to study maximal operators associated with semigroups. His maximal theorem implies that
for any maximal function $T_* f(x)=\sup_{t>0}|T_t f(x)|$ corresponding to a strongly continuous
positive symmetric Markov semigroup $\{T_t\}_{t>0}$ on a $\sigma$-finite measure space
$(X,\mathcal F,\mu)$ we have
\[
 \|T^*f\|_{L^p(\mu)}\leq p'\|f\|_{L^p(\mu)}.
\]
The maximal theorem is developed in
\cite[Chapter~III]{Stein1970}; the constant $p'$ follows from the
martingale argument in \cite[Chapter~IV, Section~4]{Stein1970}.
However, whether $p'$ is the norm of $T^*$ depends on the semigroup itself.
Sharpness uniformly over a class of semigroups does not imply
equality for a fixed semigroup. Theorem~\ref{thm:local-compactness}
gives a sufficient condition for equality by approximating finite
dyadic conditional expectations at suitable semigroup times.

Further developments include Cowling's semigroup maximal theorems
\cite[Section~3]{Cowling1983} and the sectorial maximal estimates and
pointwise convergence results of Carbonaro and Dragi\v{c}evi\'c
\cite[Theorem~12 and Corollary~13]{CarbonaroDragicevic2017}.

Maximal operators associated with semigroups have been studied in a
variety of classical settings. The basic examples are the heat and
Poisson semigroups, which already played an important role in the
development of Littlewood--Paley theory. Stein's theory of symmetric
diffusion semigroups \cite{Stein1970} placed these examples in a common
abstract framework and established maximal inequalities for a large
class of semigroups. Other important examples arise from the
Ornstein--Uhlenbeck semigroup and from semigroups associated with
classical orthogonal expansions, including Hermite, Laguerre, Jacobi
and Bessel expansions. The corresponding maximal operators have been
studied from several points of view, with particular attention to
strong $L^p$ boundedness, weak type $(1,1)$ estimates, regularity on Sobolev spaces and almost
everywhere convergence. For the Ornstein--Uhlenbeck setting we refer,
among others, to
\cite{CasarinoCiattiSjogren2022,MenarguezPerezSoria2000,PerezSoria2000,Sjogren1983}.  Maximal operators associated with Laguerre,
Jacobi and Bessel heat and Poisson semigroups were considered in
\cite{Carneiro2019,CarneiroSvaiter2013,NowakSjogren2012,NowakSjogren2013,NowakSjogrenSzarek2017,NowakSzarek2012}; see also the references therein. In contrast to
boundedness and endpoint questions, considerably less seems to be known
about the exact strong $L^p$ norms of these operators.

Having discussed $L^p$ norms of semigroups, we now focus on norms of Hardy-Littlewood maximal operators. As for the uncentered
Hardy--Littlewood maximal operator on $\mathbb R$, its exact $L^p$ norm was determined by Grafakos and Montgomery-Smith \cite{GrafakosMontgomerySmith1997}.

The problem of determining the exact norm is quite delicate for the classical centered Hardy--Littlewood maximal operator
\[
 \mathscr B_{*}^1 f(x)=\sup_{t>0}\frac{1}{2t}
       \int_{x-t}^{x+t}|f(y)|\,dy
\]
on $\mathbb R$. 
Melas \cite{Melas2003} determined the exact weak type $(1,1)$
bound for $\mathscr B_*^1$.
By contrast, the problem of determining the sharp strong
$L^p$ norm of the centered Hardy--Littlewood maximal operator,
$1<p<\infty$, remained open even on the real line; see, e.g., the survey of Deleaval, Gu\'edon and Maurey
\cite[p.~3]{DeleavalGuedonMaurey2018}.

The elementary comparison with the one-sided maximal
operators gives
\[
 \|\mathscr B_*^1\|_{L^p(\mathbb R)\to L^p(\mathbb R)}\leq p'.
\]
The best bound was investigated numerically by Dror, Ganguli and Strichartz \cite{DrorGanguliStrichartz1996}.  They observed that
$|x|^{-1/p}$ is a formal eigenfunction of $\mathscr B_*^1$ and conjectured that the
corresponding eigenvalue gives the exact $L^p$ norm. Equivalently, their
candidate for the norm of $\mathscr B_*^1$ was
\[
 c_p
 =
 p'\sup_{\tau>1}
 \frac{(\tau+1)^{1-1/p}+(\tau-1)^{1-1/p}}{2\tau}.
\]
They proved that $c_p$ is a lower bound and supported the conjecture by
numerical evidence. Subsequently, Grafakos, Montgomery-Smith and Motrunich
\cite{GrafakosMontgomerySmithMotrunich1999}  proved that
$c_p$ is the optimal constant within the class of peak-shaped
functions, namely, nonnegative functions that are convex on each side of one point. Note that for $\tau>1$, strict concavity of $u\mapsto u^{1-1/p}$ gives
\[
 \frac{(\tau+1)^{1-1/p}+(\tau-1)^{1-1/p}}{2\tau}
 <\tau^{-1/p}<1.
\]

The quotient extends continuously to $\tau=1$, where its value is
$2^{-1/p}$, and tends to zero as $\tau\to\infty$.  Therefore its supremum is
strictly smaller than one.  Hence $c_p<p'$, so, by Corollary \ref{cor:HL-norm-R}, the conjectured
value is not the operator norm on all of $L^p(\mathbb R).$ 

Similarly, not much was known about the exact norm of the Hardy-Littlewood maximal operator on $\R^d$
\[
 \mathscr B_{*}^{d} f(x)=\sup_{t>0}\frac{1}{|B_t|}
       \int_{B_t}|f(x-y)|\,dy.
\]
From the work of the first and last authors \cite{CicconeWrobel}, it follows that 
\[\|\mathscr B_*^d\|_{L^p(\mathbb R^d)\to L^p(\mathbb R^d)}\ge \frac25 p'\] and that the limit
\[
\lambda(p):=\lim_{d\to \infty}\|\mathscr B_*^d\|_{L^p(\mathbb R^d)\to L^p(\mathbb R^d)}
\]
exists. The same work identifies this limit with the high-dimensional
limit of the Gaussian maximal norms. Using the exact heat maximal norm, we obtain the general lower bound $p'$ and the identity $\lambda(p)=p'$.

We also mention that sharp constants have been studied extensively for martingale and
dyadic maximal operators. For the dyadic maximal operator, and more
generally for tree-like maximal operators, Melas
\cite{Melas2005} computed the corresponding Bellman functions. The
structure of extremal and approximately extremal sequences was studied
further in \cite{Melas2019,Nikolidakis2017}. More recently, Kosz \cite{Kosz2024} studied sharp constants in inequalities admitting
the Calder\'on transference principle.


\subsection*{Acknowledgments}
V.C. and B.W. were supported by the National Science Centre, Poland, grant Sonata Bis 2022/46/E/ST1/00036.
J.M. was partially supported by the Simons Foundation Grant \#453576. J.N. was supported by the NSF CAREER grant DMS-2236493.\\

\subsection*{AI Statement}
The mathematical ideas and results in the main body of this paper
are due entirely to the authors, who developed the arguments and
wrote the proofs. AI assistance in the main body was limited to
editorial work. In contrast, the proof of
Proposition~\ref{prop:HL-norm-Rd} was obtained with crucial assistance
from GPT-6 Astra.  To distinguish this contribution from the authors'
arguments, we present that proof separately in
Appendix~\ref{sec:strict-HL-lower-bound}.
The authors take responsibility for the mathematical content of
the entire paper.

\section{Preliminaries}\label{sec:prelim}
\subsection{Stein's maximal inequality}
Let $(X,\mathcal F,\mu)$ be a $\sigma$-finite measure space.
A \emph{strongly continuous positive symmetric sub-Markovian
semigroup} is a family $(T_t)_{t\geq0}$ of operators acting on
every $L^q(\mu)$, $1\leq q\leq\infty$, whose realizations agree
on intersections of these spaces, such that
\[
 T_0=\Id,\qquad T_sT_t=T_{s+t},\qquad
 0\leq T_t\one\leq\one,
\]
each $T_t$ is positive, contractive on every $L^q(\mu)$,
and self-adjoint on $L^2(\mu)$, and
\[
 \|T_tf-f\|_{L^2(\mu)}\longrightarrow0
 \qquad(t\downarrow0),\qquad f\in L^2(\mu).
\]
The $L^\infty$ realization is the Banach adjoint of the
$L^1$ realization. The inequality $T_t\one\leq\one$ is
understood in $L^\infty(\mu)$.
If $T_t\one=\one$ for every $t\geq0$, we call the semigroup
\emph{Markov}.

It can be easily verified that the semigroup is strongly continuous on every
$L^q$, $1<q<\infty$. Namely,
for every $1<q<\infty$ and every $g\in L^q(\mu)$,
\begin{align}\label{eq:Lp-strong-continuity}
 \|T_tg-g\|_{L^q(\mu)}\longrightarrow0\qquad(t\downarrow0).
\end{align}
Moreover, $t\mapsto T_tg$ is continuous from $[0,\infty)$ to
$L^q(\mu)$.

Throughout the paper, for $1<p<\infty$ we let \[p'=p/(p-1).\]
We say that a linear map $J: L^p(Y) \to L^p(X)$ is a lattice embedding if it is injective and
\[
J(|f|)=|J(f)|
\]
for every $f\in L^p(Y)$. Such a map preserves finite maxima of nonnegative functions.
For $f\in L^p(\mu)$, we define
\[
 T^*f=\esssup_{t>0}T_t|f|.
\]
Positivity gives
\[
 \esssup_{t>0}|T_tf|\leq T^*f.
\]
Applied to $|f|$, the left-hand side equals $T^*f$. Hence $T^*$ and
$f\mapsto\esssup_{t>0}|T_tf|$ have the same $L^p$-operator norm.


The classical argument outlined in \cite[Chapter~IV, Section~4]{Stein1970} applies Rota's theorem \cite[Theorem~2(a), p.~101]{Rota1962} in its $\sigma$-finite formulation \cite[Chapter~IV, Section~4, Theorem~9, pp.~106--110]{Stein1970}. Combined with a $\sigma$-finite reformulation of Doob's estimates \cite[Chapter~VII, Section~3, Theorem~3.4, p.~317]{Doob1953}, this application yields the maximal theorem for positive symmetric Markov semigroups. To extend this result to sub-Markovian semigroups, one may instead use Starr's extension of Rota's theorem to positive contractions on $\sigma$-finite measure spaces \cite{Starr1966}. Its quantitative form,
with the sharp Doob constant $p'=p/(p-1)$, follows from \cite[Theorem~4.7 and Estimate~A, p.~489; p.~490]{Bradley1991}. For the convenience of the reader, we provide a sketch of the proof below.

\begin{theorem}\label{thm:stein-maximal}
Let $1<p<\infty$, and let $(T_t)_{t\geq0}$ be a strongly continuous
positive symmetric sub-Markovian semigroup on a $\sigma$-finite measure space
$(X,\mathcal F,\mu)$.  Then, 
for every $f\in L^p(\mu)$,
\[
 \|T^*f\|_{L^p(\mu)}\leq p'\|f\|_{L^p(\mu)}.
\]
Equivalently,
\[
 \left\|\esssup_{t>0}|T_tf|\right\|_{L^p(\mu)}
 \leq p'\|f\|_{L^p(\mu)}.
\]
\end{theorem}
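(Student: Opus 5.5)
We follow Stein's classical route: reduce the continuous-time supremum to a countable one, and then dominate the even-time operators $T_{2t}$ by conditional expectations via Rota's theorem in Starr's sub-Markovian form. By positivity we may assume $f\ge0$. By strong continuity \eqref{eq:Lp-strong-continuity}, $t\mapsto T_tf$ is continuous into $L^p(\mu)$, so the essential supremum over $t>0$ coincides with the supremum over positive dyadic rationals $t\in 2^{-k}\mathbb N$. (A subsequence argument gives a.e. convergence along rationals, which identifies the two suprema up to null sets.) By monotone convergence it therefore suffices to prove, for each fixed $k$ and $N$,
\[
 \Bigl\|\max_{1\le n\le N} T_{n2^{-k}}f\Bigr\|_{L^p}\le p'\|f\|_{L^p},
\]
and after rescaling time it is enough to treat $\max_{1\le n\le N}P^n f$, where $P=T_{2^{-k}}$.

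Next we pass to even powers. Set $Q=T_{2^{-k-1}}$, so that $P^n=Q^{2n}$; it then suffices to bound $\sup_n Q^{2n}f$ for $f\ge0$. Here $Q$ is a positive self-adjoint contraction on $L^2$, contractive on $L^1$ and $L^\infty$, with $Q\one\le\one$. This is exactly the setting of Starr's extension of Rota's theorem \cite{Starr1966}. It provides a $\sigma$-finite measure space $(\Omega,\mathcal G,\nu)$, a decreasing filtration $\mathcal G_0\supseteq\mathcal G_1\supseteq\cdots$, a lattice embedding $J:L^p(\mu)\to L^p(\nu)$ with $\|Jf\|_p=\|f\|_p$, and a positive contraction $R$ such that
\[
 Q^{2n}f = R\,\mathbb E(\,\mathbb E(Jf\mid\mathcal G_n)\mid \mathcal G_0)\quad\text{or}\quad Q^{2n}f\le R\,\mathbb E(Jf\mid\mathcal G_n).
\]
The second form is the one we need, with $R$ positive and $L^p$-contractive. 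In the Markov case $R$ is a conditional expectation composed with the inverse embedding; in the sub-Markovian case Starr's construction uses an extended space with a cemetery or killing, which yields the inequality. We take the quantitative form \cite[Theorem~4.7]{Bradley1991} as the precise statement to invoke.

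Given this representation, positivity of $R$ gives
\[
 \max_{n\le N}Q^{2n}f\le R\Bigl(\max_{n\le N}\mathbb E(Jf\mid\mathcal G_n)\Bigr).
\]
The sequence $(\mathbb E(Jf\mid\mathcal G_n))_{n\le N}$ is a reverse martingale, so reversing the indices produces a finite martingale. Doob's inequality, in its $\sigma$-finite form \cite{Doob1953}, gives the constant $p'$. Combining these, the contractivity of $R$, and $\|Jf\|_p=\|f\|_p$ yields the bound. Letting $N\to\infty$ and then $k\to\infty$ concludes the proof. The equivalent formulation follows from the remark before the theorem that $T^*$ and $f\mapsto\esssup_{t>0}|T_tf|$ have the same norm.

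The main obstacle is the Rota–Starr step in the sub-Markovian, $\sigma$-finite setting: constructing the dilation and checking that $R$ is genuinely an $L^p$-contraction, so that no constant is lost. Since the paper only sketches this, we would cite \cite{Starr1966,Bradley1991} for the dilation rather than rebuild it. If more detail is wanted, one option is to reduce to the Markov case by adjoining an absorbing point $\partial$: extend $\mu$ by a point mass of infinite weight, or of a suitable weight handled via $\sigma$-finiteness, and define $\tilde T_t$ so that mass lost from $T_t\one$ is sent to $\partial$. One would then apply Rota's original theorem to this Markov extension and restrict back to $X$, where positivity preserves the maximal bound. A second delicate point is symmetry of the extension; if it fails, we fall back on Starr's theorem as stated.
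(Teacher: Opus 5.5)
Your proof is correct and is essentially the paper's sketch: reduce the essential supremum to dyadic times via strong $L^p$-continuity and monotone convergence, write $T_{n2^{-k}}=Q^{2n}$ with $Q=T_{2^{-k-1}}$, and invoke Starr's sub-Markovian version of Rota's theorem with Doob's inequality, taking the sharp constant $p'$ from Bradley. The optional cemetery-point reduction is not needed, and an atom of infinite mass would leave the $\sigma$-finite setting, so citing Starr directly, as the paper does, is the right choice.
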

\begin{proof}[Proof (sketch)]
Assume that $P$ is a positive contraction on $L^1$ and $L^{\infty}$ which is self-adjoint on $L^2$ and satisfies the sub-Markov inequality $P \one \leq \one.$ Then, \cite{Starr1966} implies that
\[
    \left\|\sup_{n\geq0}|P^{2n} f|\right\|_{L^p}
    \leq p' \|f\|_{L^p};
\]
see \cite[Estimate~A, p.~489; see also p.~490]{Bradley1991} for the exact constant $p'.$
Applying this estimate to $P=T_{h/2},$ $h>0,$ and using the semigroup property $P^{2n}=T_{nh},$ we obtain
\[
    \left\|\sup_{n\geq0}|T_{nh}f|\right\|_{L^p}
    \leq p'\|f\|_{L^p}.
\]
Now, taking $h=2^{-k}$, letting $k\to\infty,$ and using the monotone convergence theorem, we obtain the maximal inequality for the supremum restricted to positive dyadic numbers. Finally, approximating arbitrary $t>0$ by dyadic numbers and
using the strong $L^p$-continuity of $(T_t)_{t\geq0}$ yields
\[
    \left\|\esssup_{t>0}|T_tf|\right\|_{L^p}
    \leq p'\|f\|_{L^p},
\]
as desired.
\end{proof}

The maximal theorem above readily implies the upper bound in Theorem \ref{thm:local-compactness}. The proof of the lower
bound uses positivity, contractivity, symmetry, and strong
continuity, but does not require preservation of constants.

\subsection{Finite dyadic maximal constants}

To establish the lower bound in Theorem \ref{thm:local-compactness}, we will make essential use of the finite dyadic maximal constants defined below and the fact that they increase to $p'$.

For each integer $N\geq0$, let
\[
 \Omega_N=\{-1,1\}^N
\]
with normalized counting measure.  For $0\leq n\leq N$, define
\[
 E_nF(\omega)
 =2^{n-N}\!\!\!\sum_{\sigma_{n+1},\ldots,\sigma_N\in\{-1,1\}}
 F(\omega_1,\ldots,\omega_n,
   \sigma_{n+1},\ldots,\sigma_N).
\]
When $n=N$, the sum consists of the single term $F(\omega)$.  Put
\begin{equation}
 D_{p,N}
 =\sup_{\substack{F\geq0\\F\neq0}}
 \frac{\left\|\max_{0\leq n\leq N}E_nF\right\|_{L^p(\Omega_N)}}
      {\|F\|_{L^p(\Omega_N)}}.
 \label{eq:finite-dyadic-definition}
\end{equation}
Since $|E_nF|\leq E_n|F|$, this is also the operator norm of the map
\[
 F\longmapsto\max_{0\leq n\leq N}|E_nF|.
\]

\begin{proposition}
\label{prop:finite-dyadic}
For every $1<p<\infty$ and every integer $N\geq0$,
\[
 D_{p,N}\leq D_{p,N+1}\leq p'.
\]
Moreover,
\[
 \lim_{N\to\infty}D_{p,N}=p'.
\]
In particular, for every $\tau>0$ there are an integer $N\geq1$ and a
nonzero $F:\Omega_N\to[0,\infty)$ such that
\[
 \left\|\max_{0\leq n\leq N}E_nF\right\|_{L^p(\Omega_N)}
 \geq(p'-\tau)\|F\|_{L^p(\Omega_N)}.
\]
\end{proposition}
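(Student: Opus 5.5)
The plan has three parts: monotonicity and the upper bound, an extremal construction for the limit, and the final deduction.

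\textbf{Monotonicity and the upper bound.} For $D_{p,N}\le D_{p,N+1}$, take $F$ on $\Omega_N$ and lift it to $\Omega_{N+1}$ by $\tilde F(\omega_1,\dots,\omega_{N+1})=F(\omega_1,\dots,\omega_N)$. Under this lift, $E_n\tilde F$ on $\Omega_{N+1}$ is the lift of $E_nF$ for $n\le N$, and $E_{N+1}\tilde F=\tilde F=E_N\tilde F$. The lift also preserves $L^p$ norms under normalized counting measure, so every ratio attained at level $N$ is attained at level $N+1$. The bound $D_{p,N}\le p'$ follows from Doob's maximal inequality, since $(E_nF)_{0\le n\le N}$ is a martingale on the probability space $\Omega_N$ with filtration generated by the first $n$ coordinates.

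\textbf{The limit.} Since the sequence is monotone and bounded, the limit exists and is at most $p'$. The work is to show it is at least $p'$, which requires near-extremizers. My first choice is to invoke the known sharpness of Doob's inequality for dyadic martingales (Wang, cited in the introduction): for every $\tau>0$ there is an $L^p$-bounded dyadic martingale $f$ on $[0,1)$ with $\|\sup_n|\mathbb E(f\mid\mathcal D_n)|\|_p\ge (p'-\tau/2)\|f\|_p$. If I must be self-contained, I would use the classical example $f(x)=x^{-1/p}\mathbf 1_{(0,1)}$ or a truncation $x^{-1/p+\epsilon}$ with $\epsilon\downarrow0$. Its dyadic averages on $[0,2^{-n})$ equal $p'\,2^{n/p}$ up to the truncation, so the dyadic maximal function dominates roughly $p'f$ on most of the mass. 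Taking $\epsilon$ small and the truncation level large makes $\|Mf\|_p/\|f\|_p\ge p'-\tau/2$. This estimate is the main obstacle: the $L^p$ norms are borderline divergent, so the ratio must be computed with a truncation at scale $2^{-K}$. The dominant contribution is then $\sum_{n\le K}$ of comparable terms, which gives ratio $\to p'$ as $K\to\infty$, while the boundary terms are $O(1)$ against a norm of order $K^{1/p}$.

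\textbf{Transfer and conclusion.} To pass to the cube, note that $[0,1)$ with dyadic filtration is isomorphic to $\{-1,1\}^{\mathbb N}$ via binary digits. Replacing $f$ by $F=\mathbb E(f\mid\mathcal D_N)$ for large $N$ keeps both norms close, since $F\to f$ in $L^p$ and the maximal operator is bounded by $p'$. The truncated maximal function $\max_{n\le N}E_nF$ then converges in $L^p$ to the full maximal function by monotone convergence. This yields $D_{p,N}\ge p'-\tau$ for large $N$. With monotonicity, this gives the limit $p'$, and the ``in particular'' statement follows by choosing such an $N\ge1$ together with the corresponding nonnegative $F$. Here $|f|$ may be used in place of $f$, so $F$ can be taken nonnegative.
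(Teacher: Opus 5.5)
Your main line is correct and is essentially the paper's argument. You get monotonicity by lifting and the upper bound from Doob. For the limit, you import the known sharpness of $p'$ for the full dyadic maximal operator on $[0,1)$ (the paper cites Melas, you cite Wang) and pass to finite depth. Your finite-depth step is a bit more direct than the paper's density-plus-sublinearity argument. For $F=\mathbb E(f\mid\mathcal D_N)$ one has $E_nF=\mathbb E(f\mid\mathcal D_n)$ for $n\le N$, so $\max_{n\le N}E_nF$ increases to the full dyadic maximal function while $\|F\|_p\le\|f\|_p$. Monotone convergence then suffices, without invoking boundedness of the maximal operator.

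The self-contained fallback, however, does not work, and it fails at exactly the step you call the main obstacle. Take $f(x)=x^{-1/p}$ and $x\in[2^{-n-1},2^{-n})$. The dyadic intervals containing $x$ are $[0,2^{-m})$ with $m\le n$, whose averages are $p'2^{m/p}$, and subintervals of $[2^{-n-1},2^{-n})$. On the latter $f\le2^{(n+1)/p}<p'2^{n/p}$, because $(p')^p>2$. Hence $\mathcal Df(x)=p'2^{n/p}=p'(2^nx)^{1/p}f(x)$ there, which equals $p'f(x)$ only at the right endpoint, and
\[
 \int_{2^{-n-1}}^{2^{-n}}(\mathcal Df)^p\,\dd x=\frac{(p')^p}{2},
 \qquad
 \int_{2^{-n-1}}^{2^{-n}}f^p\,\dd x=\ln 2 .
\]
Every scale contributes in the proportion $(p')^p/(2\ln 2)$. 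So for your truncations, or for $x^{-1/p+\epsilon}$, the quotient $\|\mathcal Df\|_p/\|f\|_p$ tends to $p'(2\ln2)^{-1/p}<p'$, about $1.70$ for $p=2$.

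The power function is extremal for the continuous Hardy average $\frac1x\int_0^x$, but dyadic ancestors are nested with ratio $\frac12$, which is too coarse. A self-contained near-extremizer must split off only a small proportion $\delta=2^{-k}$ at each stage. On a dyadic piece of relative measure $\delta$, freeze the value at $b$ times the current value. On the remaining $k$ dyadic pieces, multiply by $a=(1-\delta)^{-1/p}$ and iterate, where $(1-\delta)a+\delta b=1$ as in the paper's appendix. After $n$ stages this gives $\|f\|_p^p=1+n\delta b^p$ and $\|\mathcal Df\|_p^p\ge1+n\delta$. The ratio therefore tends to $1/b$ as $n\to\infty$, and $1/b\to p'$ as $\delta\to0$.
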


Given its role in the sequel, we include a detailed proof of the above proposition for the reader's convenience.

\begin{proof}
Identify $\Omega_N$ with the atoms of the dyadic $\sigma$-algebra of depth $N$
on $[0,1)$.  Under this identification, $E_n$ is conditional expectation
onto the dyadic $\sigma$-algebra of depth $n$.  Doob's inequality gives
$D_{p,N}\leq p'$.  If a function on $\Omega_N$ is regarded as a function
on $\Omega_{N+1}$ independent of the last coordinate, all its conditional
expectations are unchanged, and the new terminal expectation equals the
function itself, which was already the $n=N$ term.  Hence
$D_{p,N}\leq D_{p,N+1}$.

It remains to identify the limit. For $x\in[0,1)$, let
$\mathcal DF(x)$ be the supremum of
\[
 \left|\frac1{|I|}\int_I F(y)\,\dd y\right|
\]
over all dyadic intervals $I\subset[0,1)$ containing $x$.
Its sharp $L^p$ norm is $p'$; see
\cite[(1.3) and the paragraph following it, p.~311]{Melas2005}.  If $F$ is
a dyadic step function of depth $N$, then all conditional expectations
beyond depth $N$ equal $F$, and therefore
\[
 \|\mathcal DF\|_p\leq
 \left(\sup_{M\geq1}D_{p,M}\right)\|F\|_p.
\]
Finite-depth dyadic step functions are dense among the nonnegative
functions in $L^p([0,1))$.  Moreover,
\[
 |\mathcal DF-\mathcal DG|\leq\mathcal D(F-G)
\]
for nonnegative $F,G$, so Doob's inequality allows passage to the limit.
It follows that $\|\mathcal D\|_{L^p\to L^p}\leq\sup_M D_{p,M}$.
Sharpness and the already proved upper bound give
$p'\leq\sup_M D_{p,M}\leq p'$.  Hence monotonicity yields
$D_{p,N}\uparrow p'$.  Given $\tau>0$, choose $N$ so that
$D_{p,N}>p'-\tau$; the final statement then follows from
\eqref{eq:finite-dyadic-definition}.
\end{proof}

\section{Proof of Theorem \ref{thm:local-compactness}.}
\label{sec:local-compactness}

In this section, we present the proof of Theorem \ref{thm:local-compactness}. Namely, we show that compactness of one localized semigroup operator suffices for the associated maximal operator to have norm $p'$.

\medskip

\textit{Proof of Theorem \eqref{thm:local-compactness}.}
By Theorem~\ref{thm:stein-maximal}, only the lower bound remains. Without loss of generality, $0<\mu(E)<\infty$.
Indeed, by $\sigma$-finiteness there is a measurable
$\widetilde E\subseteq E$ with $0<\mu(\widetilde E)<\infty$.
The map $f\mapsto T_{t_0}(\one_{\widetilde E}f)$ is compact,
being the composition of the compact map
$f\mapsto T_{t_0}(\one_Ef)$ with the bounded map
$f\mapsto\one_{\widetilde E}f$.

Replacing $\mu$ by $\mu/\mu(E)$, we may assume $\mu(E)=1$.
Fix $1<p<\infty$, an integer $N\ge1$, and $\varepsilon>0$.
Recall the definitions of $\Omega_N$, $E_n$, and $D_{p,N}$ from
Section~\ref{sec:prelim}. In what follows, we shall construct an isometric lattice embedding $J: L^p(\Omega_N) \to L^p(X)$ and numbers $s_0> \ldots>s_N>0$ such that 
\[
  \|T_{s_n}JF-JE_nF\|_{L^p(\mu)} \le 2^N\varepsilon \|F\|_{L^p(\Omega_N)}
\]
holds for all $F \in L^p(\Omega_N)$, $0 \le n \le N$. Since $(E,\mu|_E)$ is a nonatomic probability space, we can choose independent
Rademacher functions $(r_m)_{m\ge1}$ on it and extend them by $1$
outside $E$.
By \eqref{eq:Lp-strong-continuity}, there is $0<s_0<t_0$ such that
\[
 \|T_{s_0}\one_E-\one_E\|_p<\varepsilon.
\]

We choose $k_1<\cdots<k_N$ and $s_0>\cdots>s_N>0$ inductively.
Put $k_0=0$ and, for the indices already chosen, write
\[
 w_S=\one_E\prod_{j\in S}r_{k_j},
 \qquad w_\varnothing=\one_E.
\]
Suppose that $k_1,\ldots,k_{j-1}$ and $s_0,\ldots,s_{j-1}$ are fixed.
Fix $S\subseteq\{1,\ldots,j-1\}$ and set $v_m=w_Sr_m$
for $m>k_{j-1}$.
Since
\[
 \langle v_m,v_\ell\rangle_{L^2(\mu)}
 =\int_Er_mr_\ell\,\dd\mu=\one_{\{m=\ell\}},
\]
by Bessel's inequality, we have $v_m\rightharpoonup0$ weakly in $L^2(\mu)$. Moreover,
$v_m=\one_Ev_m$, so the compactness assumption  gives
\[
 \|T_{t_0}v_m\|_2\longrightarrow0.
\]
On $L^2(\mu)$, write $T_t=e^{-tA}$, where $A$ is nonnegative and
self-adjoint, and let $\mathcal P$ be the spectral measure corresponding to $A$. The spectral theorem and H\"older's inequality give,
for each fixed $0<s<t_0$,
\[
 \|T_sv_m\|_2=\bigg(\int_0^{\infty}e^{-sa}\mathcal \, d\mathcal{P}_{v_m,v_m}(a)\bigg)^{1/2}
 \le\|T_{t_0}v_m\|_2^{s/t_0}\|v_m\|_2^{1-s/t_0}
 =\|T_{t_0}v_m\|_2^{s/t_0}\longrightarrow0.
\]
Contractivity gives $\|T_sv_m\|_1\le1$ and $\|T_sv_m\|_\infty\le1$.
Interpolating with the $L^2$ estimate, we obtain
\[
 \|T_sv_m\|_p
 \le\|T_sv_m\|_2^{\min\{2/p,\,2/p'\}}
 \longrightarrow0.
\]

The preceding limits hold for each of the finitely many pairs $(i,S)$
with $0\le i<j$ and $S\subseteq\{1,\ldots,j-1\}$. Choose $k_j>k_{j-1}$
so that
\begin{equation}\label{eq:local-compactness-annihilation}
 \|T_{s_i}w_{S\cup\{j\}}\|_p<\varepsilon
\end{equation}
 holds simultaneously for all such pairs. Having fixed $k_j$, use
\eqref{eq:Lp-strong-continuity} to choose $0<s_j<s_{j-1}$ such that
\begin{equation}\label{eq:local-compactness-preservation}
 \|T_{s_j}w_S-w_S\|_p<\varepsilon
 \qquad\bigl(S\subseteq\{1,\ldots,j\}\bigr).
\end{equation}

 Once the induction is complete, define
\[
 JF=\one_EF(r_{k_1},\ldots,r_{k_N}),
 \qquad F:\Omega_N\to\mathbb C.
\]
Independence gives
\[
 \|JF\|_{L^p(\mu)}^p
 =2^{-N}\sum_{\omega\in\Omega_N}|F(\omega)|^p
 =\|F\|_{L^p(\Omega_N)}^p.
\]
Also $J|F|=|JF|$, so $J$ is an isometric lattice embedding.
 
 For $S\subseteq \lbrace 1,...,N\rbrace$, we define the Walsh characters on $\Omega_N$ as
$$W_S(\omega)=\prod_{j\in S}\omega_j~,$$
and observe that the conditional expectation acts diagonally on them, namely
\begin{equation}
 E_nW_S=
 \begin{cases}
  W_S,&S\subseteq\{1,\ldots,n\},\\
  0,&S\nsubseteq\{1,\ldots,n\}.
 \end{cases}
 \label{eq:Walsh-conditional}
\end{equation}
We have $JW_S=w_S$. If $S\subseteq\{1,\ldots,n\}$, apply
\eqref{eq:local-compactness-preservation} with $j=n$; the initial choice
of $s_0$ covers $n=0$. Otherwise, let $j=\max S>n$ and apply
\eqref{eq:local-compactness-annihilation} to $S\setminus\{j\}$ at time
$s_n$. Together with \eqref{eq:Walsh-conditional}, these estimates give
\[
 \|T_{s_n}JW_S-JE_nW_S\|_p\le\varepsilon
 \qquad(0\le n\le N).
\]

Expand $F=\sum_{S\subseteq\{1,\ldots,N\}}c_SW_S$. Since
$|c_S|\le\|F\|_{L^1(\Omega_N)}\le\|F\|_{L^p(\Omega_N)}$,
\begin{align*}
 \left\|\max_{0\le n\le N}|T_{s_n}JF-JE_nF|\right\|_p
 &\le\sum_{n=0}^N\sum_S|c_S|
       \|T_{s_n}JW_S-JE_nW_S\|_p\\
 &\le(N+1)2^N\varepsilon\|F\|_{L^p(\Omega_N)}.
\end{align*}
For $F\ge0$, the positivity of and preservation of finite maxima of $J$ yields
\begin{align*}
 \|T^*JF\|_p
 &\ge\left\|\max_{0\le n\le N}T_{s_n}JF\right\|_p\\
 &\ge\left\|\max_{0\le n\le N}E_nF\right\|_{L^p(\Omega_N)}
       -(N+1)2^N\varepsilon\|F\|_{L^p(\Omega_N)}.
\end{align*}
Divide by $\|JF\|_p=\|F\|_{L^p(\Omega_N)}$ for nonzero $F\ge0$.
Taking the supremum in \eqref{eq:finite-dyadic-definition} and then
letting $\varepsilon\downarrow0$ with $N$ fixed gives
\[
 D_{p,N}\le\|T^*\|_{p\to p}\le p'.
\]
Let $N\to\infty$ and apply Proposition~\ref{prop:finite-dyadic}.
This proves the theorem.

\qed

\section{Applications to norms of maximal functions} 
\label{sec: applications}

\subsection{Maximal functions of semigroups}
Without attempting to be exhaustive, in this subsection we present several examples of classical semigroups that fit within the framework of Theorem \ref{thm:local-compactness}. For definitions of the semigroups in items (c)--(h), see the references cited below. 
\begin{corollary} 
\label{cor: semigroup examples}
Let $d\geq 1$, $1<p<\infty$.
Let $(T_t)_{t\ge0}$ be one of the following strongly continuous positive symmetric sub-Markovian semigroups: 
\begin{enumerate}[label=(\alph*)]
    \item The heat semigroup on $L^p(\R^d,\dd x)$,  $T_t=\mathcal{H}_t^d=e^{t \Delta}$. 
    \item The periodic heat semigroup on $L^p(\mathbb{T}^d,\dd m)$ with $\dd m$ the normalized Lebesgue measure on $\mathbb{T}^d$. 
    \item The Dunkl-semigroup $T_t=e^{t\Delta_{\kappa}}$ associated to a general root system and a non-negative multiplicity function $\kappa$; see \cite{DziubanskiHejna} or \cite{Rosler}. 
       \item The Schr\"odinger semigroup on $L^p(\R^d,\dd x)$, $T_t= e^{-tH}$ with $H=-\Delta + V,$ where $V\ge0$ and $V\in L^1_{{\rm loc}}$; see \cite[Section 2]{KucharskiWrobel}.

\item The heat semigroup $T_t=e^{t\Delta_M}$ on
$L^p(M,\mu)$, where $M$ is a complete connected
$d$-dimensional Riemannian manifold without boundary,
$\mu$ is its Riemannian volume measure, and $\Delta_M$
is the nonpositive self-adjoint Laplace--Beltrami operator;
see \cite[Sections~2.2 and~3.1--3.2]{Grigoryan2006}.
       
       \item The Ornstein-Uhlenbeck semigroup on $L^p(\R^d,\gamma_d)$, $T_t=e^{tL}$ with $L=\Delta - x\cdot\nabla$, where $\gamma_d$ is the standard Gaussian measure; see \cite[Section~2.2, (2.27), Theorem 2.5]{Urbina2019} (after a change of Gaussian normalization).

\item The heat semigroup associated with Laguerre polynomials on $L^p((0,\infty)^d,\dd \mu_\alpha) $ with
\[
 \dd\mu_\alpha(x)=\prod_{j=1}^d\frac{x_j^{\alpha_j}e^{-x_j}}
                         {\Gamma(\alpha_j+1)}\,\dd x,
\]
where $\alpha_j>-1$ for $1\leq j\leq d$;
see \cite[Section~2]{NowakStempakContractivity}.

                             \item  The heat semigroup associated with Jacobi polynomials on $L^p((-1,1)^d, \dd\mu_{\alpha,\beta})$ 
with
\[
 \dd\mu_{\alpha,\beta}(x)=\prod_{j=1}^d
   \frac{\Gamma(\alpha_j+\beta_j+2)
         (1-x_j)^{\alpha_j}(1+x_j)^{\beta_j}}
        {2^{\alpha_j+\beta_j+1}
         \Gamma(\alpha_j+1)\Gamma(\beta_j+1)}\,\dd x,
\]
where $\alpha_j,\beta_j>-1$ for $1\leq j\leq d$;
see \cite[Sections~2 and~5]{NowakSjogren2013}.

\end{enumerate}
Then we have $$\|T^\ast\|_{L^p(\mu)\rightarrow L^p(\mu)}= p'~.$$
\end{corollary}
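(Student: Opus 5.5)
The plan is to apply Theorem~\ref{thm:local-compactness} to each of (a)--(h). In every case the semigroup is already known (from the cited references) to be strongly continuous, positive, symmetric and sub-Markovian on a nonatomic $\sigma$-finite space: the underlying measures are absolutely continuous with respect to Lebesgue or Riemannian measure. So the only thing to verify is the local compactness hypothesis. I will take $t_0=1$ and $E$ a bounded set of positive measure that stays away from any boundary or singular points (for instance $E=[1,2]^d$ in the Laguerre case, and a small cube inside $(-1,1)^d$ in the Jacobi case). It then suffices to show that $f\mapsto T_{1}(\one_E f)$ is compact on $L^2(\mu)$. The tool for this is a Hilbert--Schmidt or domination argument.

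\textbf{Cases with kernels.} In (a), (b), (c), (f), (g) and (h) the semigroup has an explicit or well-controlled kernel $T_1(x,y)$. Compactness follows once
\[
\int_X\int_E |T_1(x,y)|^2\,\dd\mu(y)\,\dd\mu(x)<\infty .
\]
By symmetry and the semigroup property, the left-hand side equals $\int_E T_2(y,y)\,\dd\mu(y)$. It is therefore enough that the on-diagonal kernel $T_2(y,y)$ be locally bounded on $E$.

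This holds in each case:
\begin{itemize}
\item For the Gaussian (a), (b), $T_2(y,y)$ is constant.
\item For Dunkl (c), $T_2(y,y)\le C\,w(B(y,1))^{-1}$ with $w$ doubling, by the known heat kernel bounds.
\item For Ornstein--Uhlenbeck (f), Mehler's formula gives a continuous diagonal.
\item For Laguerre (g) and Jacobi (h), the diagonal kernels are continuous on the interior, via the Hille--Hardy and Bailey-type formulas, or simply because $\sum_n e^{-2\lambda_n}\phi_n(y)^2$ converges locally uniformly there.
\end{itemize}
In (b), (f), (g) and (h) the operator $T_1$ is in fact itself compact (discrete spectrum, trace class), so the hypothesis is immediate even with $E=X$.

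\textbf{Schrödinger case (d).} Here I will use domination. By the Trotter product formula or the Feynman--Kac formula, $0\le e^{-tH}f\le e^{t\Delta}f$ for $f\ge0$. So the kernel satisfies $0\le K_1(x,y)\le (4\pi)^{-d/2}e^{-|x-y|^2/4}$. The Hilbert--Schmidt integral over $X\times E$ is then bounded by the Gaussian one, which is $\le C|E|$. This gives compactness.

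\textbf{Manifolds (e).} This is the case where I expect the main obstacle, since there are no curvature assumptions and so no global Gaussian bounds. The plan is to use local smoothness. The heat kernel $p_t(x,y)$ exists, is smooth, and is symmetric (Grigor'yan), and $\int_M p_1(x,y)^2\,\dd\mu(x)=p_2(y,y)$ by the semigroup property. Since $p_2(y,y)$ is continuous in $y$, it is bounded on a compact set $E$, for instance a closed geodesic ball. Hence $T_1\one_E$ is Hilbert--Schmidt. This uses only the smoothness of the minimal heat kernel and not stochastic completeness: sub-Markovianity is all that Theorem~\ref{thm:local-compactness} needs.

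Having checked the hypothesis in each case, Theorem~\ref{thm:local-compactness} gives $\|T^*\|_{p\to p}=p'$.
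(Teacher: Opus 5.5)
Your proposal is correct and follows essentially the paper's argument. The kernel cases (Euclidean/Dunkl and the manifold) get Hilbert--Schmidt bounds through $\int_X\int_E T_1(x,y)^2\,\dd\mu(y)\,\dd\mu(x)=\int_E T_2(y,y)\,\dd\mu(y)$, case (d) uses Feynman--Kac domination by the Gaussian kernel, and (f)--(h) use compactness of $T_t$ from discrete spectrum.

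The one real difference is (b). The paper derives it from (a) via the Kenig--Tomas maximal transference principle. Your observation that the periodic heat operator $T_1$ on $\mathbb T^d$ is already trace class, so that Theorem~\ref{thm:local-compactness} applies with $E=\mathbb T^d$, is more direct and avoids checking that transference preserves the constant $p'$.
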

\proof
 Note first that all the described semigroups are sub-Markovian. This is either explicitly stated in the provided references or can be easily deduced from them. 

The common idea in the proofs of items (a)-(e) is to show that $T_t(\one_E)$ is a Hilbert-Schmidt operator (hence compact) for a suitable set $E$ and any $t>0.$ This will be deduced from the fact that the kernel of $f\mapsto T(\one_E f)$ belongs to the $L^2$ space over the product domain.

We start with the proofs of items (a)--(c). Taking the multiplicity
function $\kappa=0$, we see that (a) is a subcase of (c).
Furthermore, once (a) is established, (b) follows from a maximal
transference principle of Kenig and Tomas
\cite[Theorem~1]{KenigTomas}. We note that, while the preservation
of the constants is not explicitly stated in
\cite[Theorem~1]{KenigTomas}, it can be read from the proof;
see also the first paragraph in \cite[p.~167]{AuscherCarro}. Thus it remains to establish c), for which we shall use the notation in \cite{DziubanskiHejna}.  Denote by $h_t(x,y)$ the Dunkl heat kernel, let $w$ be the Dunkl measure and take $E$ to be a Euclidean ball in $\R^d$. Then, using symmetry, the semigroup property of $h_t$,
and the estimates \cite[eq.~(1.13), eq.~(2.1)]{DziubanskiHejna},
we obtain
\begin{equation*}
\int_E \int_{\R^d} h_t(x,y)^2\,\mathrm dw(x)\,\mathrm dw(y)=\int_{E}h_{2t}(y,y)\,\mathrm dw(y) <\infty,\qquad t>0.
\end{equation*}
This implies that $T_t(\one_E)$ is a Hilbert-Schmidt operator; hence, it is compact. An application of Theorem \ref{thm:local-compactness} completes the proof for items a)-c).

Now, item d) follows from a). Indeed, using the Feynman--Kac formula \cite{Sznitman}
(or the Trotter product formula), we see that, for any $f\ge0$,
\[
e^{-tH}f(x)\le \mathcal H_t^d f(x), \qquad x-a.e.\
\]
Thus, repeating the argument used to prove (c) with $\kappa=0$
and $E$ a Euclidean ball, we conclude that the operator
$f\mapsto e^{-tH}(\one_E f)$ is Hilbert--Schmidt.
In particular, it is compact, and
Theorem~\ref{thm:local-compactness} applies.

For (e), the required semigroup properties and smoothness of
its heat kernel $h_t$ follow from
\cite[Theorems~3.1 and~3.3, and (3.11)--(3.12)]{Grigoryan2006}.
For a nonempty relatively compact open set $E\subset M$,
symmetry and the semigroup identity give, for every $t>0$,
\[
\begin{aligned}
 \int_E\int_M h_t(x,y)^2\,\dd\mu(x)\,\dd\mu(y)
 =\int_E h_{2t}(y,y)\,\dd\mu(y)
 \leq\mu(E)\sup_{y\in\overline E}h_{2t}(y,y)<\infty.
\end{aligned}
\]
Thus $f\mapsto T_t(\one_Ef)$ is Hilbert--Schmidt, hence
compact. Note that Riemannian volume is nonatomic and $\sigma$-finite,
so Theorem~\ref{thm:local-compactness} applies.

Each of the semigroups in items (f)--(h) is associated to an orthogonal expansion. In particular, their negative generators have discrete spectrum on $L^2$, their eigenvalues can be ordered in an increasing sequence $\lambda_n\to \infty,$ and the corresponding eigenspaces are finite dimensional. Therefore $T_t$ itself is compact for any $t>0$ and Theorem \ref{thm:local-compactness} applies with $E$ being the full space.

\qed

\par

Let $T_t=e^{-tA}$ satisfy the hypotheses of
Theorem~\ref{thm:local-compactness}, where $-A$ is its $L^2$ generator and $A$ is nonnegative and self-adjoint. The next corollary establishes a general principle: the conclusion of Theorem \ref{thm:local-compactness} is inherited by semigroups subordinate to $T_t$. To make this precise, for
a Bernstein function $\Phi$ with $\Phi(0)=0$, set
\[
 S_t=e^{-t\Phi(A)},
 \qquad
 S^*f=\operatorname*{ess\,sup}_{t>0}|S_tf|.
\]
\begin{corollary}\label{cor:bernstein-maximal-norms}
Suppose that $(T_t)_{t\geq0}$ satisfies the hypotheses of Theorem~\ref{thm:local-compactness}.
If $\Phi(\lambda)\to\infty$ as $\lambda\to\infty$, then
\[
 \|S^*\|_{L^p(\mu)\to L^p(\mu)}=p',
 \qquad 1<p<\infty.
\]
\end{corollary}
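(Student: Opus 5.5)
The plan is to check that $(S_t)_{t\ge0}$ itself satisfies the hypotheses of Theorem~\ref{thm:local-compactness}, and then apply that theorem directly. The first step is to recall Bochner subordination. For a Bernstein function $\Phi$ with $\Phi(0)=0$ and each $t>0$, there is a sub-probability convolution semigroup $(\eta_t)_{t\ge0}$ on $[0,\infty)$ with $\int_0^\infty e^{-s\lambda}\,\dd\eta_t(s)=e^{-t\Phi(\lambda)}$. By the spectral theorem this gives
\[
 S_tf=\int_0^\infty T_sf\,\dd\eta_t(s).
\]
The integral is a Bochner integral in $L^2$, and by the same formula it also makes sense on every $L^q$. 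From this representation, $S_t$ inherits positivity, contractivity on every $L^q$, consistency of its realizations, and $S_t\one\le\one$. Self-adjointness on $L^2$ and the semigroup law follow from functional calculus, since $\Phi(A)$ is nonnegative and self-adjoint. Strong continuity on $L^2$ follows from dominated convergence in the spectral representation, because $e^{-t\Phi(a)}\to1$ pointwise as $t\downarrow0$. So $(S_t)$ is a strongly continuous positive symmetric sub-Markovian semigroup on the same nonatomic $\sigma$-finite space.

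The key step is the compactness of $f\mapsto S_{t_1}(\one_Ef)$ for some $t_1>0$; I expect this to be the main obstacle, since $S_t$ is not a function of $T_{t_0}$ alone. I would use the fact, already exploited in the proof of Theorem~\ref{thm:local-compactness}, that $K_s:=T_s\one_E$ is compact for every $s>0$, not only for $s=t_0$. One route is to write $T_s\one_E=T_{s-t_0}T_{t_0}\one_E$ for $s\ge t_0$. For $s<t_0$ I would argue via weak-null sequences: if $v_m\rightharpoonup0$ and is bounded, then $\one_Ev_m\rightharpoonup0$, so $\|T_{t_0}\one_Ev_m\|_2\to0$, and the spectral H\"older estimate gives $\|T_s\one_Ev_m\|_2\le\|T_{t_0}\one_Ev_m\|_2^{s/t_0}\|v_m\|_2^{1-s/t_0}\to0$. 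Since $L^2$ is reflexive, an operator that sends weakly null sequences to norm null ones is compact, so each $K_s$ is compact.

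Next, write $S_t\one_E=\eta_t(\{0\})\,\one_E+\int_{(0,\infty)}K_s\,\dd\eta_t(s)$. The integral term is compact: it is a norm limit of integrals over $[\delta,\infty)$, and since $s\mapsto K_s$ is norm-continuous on $(0,\infty)$ (again by the H\"older-type spectral estimates), each truncated integral is a norm limit of Riemann sums of compact operators. Alternatively, one can apply the weakly-null-sequence criterion together with dominated convergence in $s$. The remaining point is the atom $\eta_t(\{0\})$. Here the hypothesis $\Phi(\lambda)\to\infty$ is used: by the L\'evy--Khintchine form $\Phi(\lambda)=b\lambda+\int(1-e^{-\lambda x})\,\nu(\dd x)$, it rules out the compound-Poisson case in which $\Phi$ is bounded. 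In that case $\eta_t(\{0\})=\lim_{\lambda\to\infty}e^{-t\Phi(\lambda)}=0$. So $S_t\one_E$ is compact for every $t>0$.

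Finally, apply Theorem~\ref{thm:local-compactness} to $(S_t)$ with any $t_1>0$ and the same set $E$. This gives $\|S^*\|_{L^p\to L^p}=p'$ for all $1<p<\infty$; the upper bound is also directly visible from Theorem~\ref{thm:stein-maximal}.
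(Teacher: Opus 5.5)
Your proposal is correct, but it proves compactness of $f\mapsto S_t(\one_Ef)$ by a different route from the paper.

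The paper does not use the subordinator measures for this step; it uses a spectral cutoff. It writes $S_t\mathbf 1_{[0,R]}(A)(\one_Ef)=\bigl(S_te^{t_0A}\mathbf 1_{[0,R]}(A)\bigr)T_{t_0}(\one_Ef)$. This is compact because the first factor is bounded, and the remainder has norm at most $e^{-t\Phi(R)}\to0$ by monotonicity of $\Phi$. So $f\mapsto S_t(\one_Ef)$ is a norm limit of compact operators.

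You instead work through $S_t=\int T_s\,\dd\eta_t(s)$. You first upgrade compactness from $T_{t_0}$ to $f\mapsto T_s(\one_Ef)$ for every $s>0$, using the spectral H\"older estimate from the proof of Theorem~\ref{thm:local-compactness}. You then integrate over $(0,\infty)$ and use $\Phi\to\infty$ to remove the atom $\eta_t(\{0\})=\lim_{\lambda\to\infty}e^{-t\Phi(\lambda)}$.

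Both arguments use the hypothesis in the same form, $e^{-t\Phi(\lambda)}\to0$. What each buys:
\begin{itemize}
\item The paper's version is shorter and needs no operator-valued integration or norm continuity in $s$. It actually shows that $f\mapsto g(A)(\one_Ef)$ is compact for every bounded Borel $g$ with $g(\lambda)\to0$ as $\lambda\to\infty$.
\item Your decomposition shows the obstruction when $\Phi$ is bounded. Then $f\mapsto S_t(\one_Ef)$ is a compact operator plus $\eta_t(\{0\})>0$ times multiplication by $\one_E$. That multiplication is not compact on a nonatomic set of positive measure, so for this $E$ the localized compactness fails for every $t>0$.
\end{itemize}

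Two smaller remarks. Of your two treatments of the integral, the weakly-null-sequence argument with dominated convergence in $s$ is the cleaner one. For the $L^\infty$ realization and the inequality $S_t\one\le\one$, you should define $S_t$ by duality with the $L^1$ realization rather than as a Bochner integral, since $s\mapsto T_sf$ need not be strongly measurable in $L^\infty$.
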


\begin{proof}
By Bochner subordination, $(S_t)_{t\geq0}$ is a strongly continuous
positive symmetric sub-Markovian semigroup; see \cite[Theorem~5.2, Proposition~13.1, and Remark~13.4]{SSV2012}.
Let $t_0,E$ be as in Theorem~\ref{thm:local-compactness}. For $t,R>0$, the operator
\[
 f\longmapsto
 S_t\mathbf1_{[0,R]}(A)(\mathbf1_Ef)
 =\bigl(S_te^{t_0A}\mathbf1_{[0,R]}(A)\bigr)
   T_{t_0}(\mathbf1_Ef)
\]
is compact, since the first factor is bounded on $L^2(\mu)$ and the
map $f\mapsto T_{t_0}(\mathbf1_Ef)$ is compact. Moreover,
\[
 \bigl\|S_t(\mathbf1_Ef)
       -S_t\mathbf1_{[0,R]}(A)(\mathbf1_Ef)\bigr\|_2
 \leq e^{-t\Phi(R)}\|f\|_2,
 \qquad f\in L^2(\mu).
\]
Since $\Phi(R)\to\infty$, these compact operators converge in operator
norm to $f\mapsto S_t(\mathbf1_Ef),$ which is therefore also compact. Theorem~\ref{thm:local-compactness} gives the conclusion.
\end{proof}
In particular, the maximal operators of the Poisson semigroups
on $\R^d$, $\mathbb T^d$, and the manifolds in item (h) of
Corollary~\ref{cor: semigroup examples} all have $L^p$ norm $p'$;
in the manifold case the semigroup is $e^{-t\sqrt{-\Delta_M}}$.

The next corollary shows that the conclusions of Theorem \ref{thm:local-compactness} can be extended slightly beyond the localized compactness hypothesis. 
\begin{corollary}
 Let $(S_t)_{t\geq 0}$ be a strongly continuous positive symmetric sub-Markovian semigroup on a $\sigma$-finite measure space $(X,\mathcal F,\mu)$. Suppose that there exist a semigroup $(T_t)_{t\geq 0}$, acting on a measure space $(Y,\mathcal B,\nu)$ and satisfying the hypotheses of Theorem \ref{thm:local-compactness}, and a positive linear embedding
\[
J:L^0(Y)\to L^0(X)
\]
such that, for every $1<p<\infty$, $J$ restricts to an isometric embedding from $L^p(Y)$ into $L^p(X)$, and
\[
S_tJ=JT_t,\qquad t>0.
\]
Then, for every $1<p<\infty$,
\[
\|S^*\|_{L^p(X,\mu)\to L^p(X,\mu)}=p'.
\]
\end{corollary}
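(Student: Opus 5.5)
The upper bound $\|S^*\|_{L^p(X,\mu)\to L^p(X,\mu)}\le p'$ is immediate from Theorem~\ref{thm:stein-maximal}, since $(S_t)$ is a strongly continuous positive symmetric sub-Markovian semigroup on a $\sigma$-finite space. For the lower bound, I will transfer the norm of $T^*$ to $S^*$ through $J$. By Theorem~\ref{thm:local-compactness} applied to $(T_t)$, we have $\|T^*\|_{L^p(\nu)\to L^p(\nu)}=p'$. So for every $\tau>0$ there is $f\in L^p(\nu)$, $f\ge0$, with $\|f\|_p=1$ and $\|T^*f\|_p\ge p'-\tau$.

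Next I reduce to a finite maximum, so that only a countable family of times enters and no measurability issue with $\operatorname{ess\,sup}$ arises. By the strong $L^p$-continuity \eqref{eq:Lp-strong-continuity}, $T^*f$ coincides a.e.\ with $\sup_{t\in\mathbb Q_{>0}}T_tf$. Monotone convergence then yields finitely many times $t_1,\dots,t_M$ with
\[
 \Bigl\|\max_{1\le i\le M}T_{t_i}f\Bigr\|_{L^p(\nu)}\ge p'-2\tau .
\]

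The key step is to show $J\bigl(\max_i g_i\bigr)\ge\max_i Jg_i$ for $g_i=T_{t_i}f\ge0$. This follows from positivity of $J$ alone: $\max_k g_k-g_i\ge0$ gives $J(\max_k g_k)\ge Jg_i$ for each $i$. Using $S_{t_i}Jf=JT_{t_i}f$ we would then get
\[
 S^*(Jf)\ \ge\ \max_i S_{t_i}Jf=\max_i JT_{t_i}f\ \le\ J\Bigl(\max_i T_{t_i}f\Bigr),
\]
but this inequality points the wrong way. Positivity alone therefore does not suffice, and this is the main obstacle.

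I will resolve it by showing that $J$ is a lattice homomorphism, using the isometry for two different exponents. For $g,h\ge0$ with disjoint supports, $\|g+h\|_p^p=\|g\|_p^p+\|h\|_p^p$ holds for all $p$. Since $J$ is isometric and $Jg,Jh\ge0$, we get $\int (Jg+Jh)^p=\int (Jg)^p+\int (Jh)^p$. For $p>1$ and nonnegative $a,b$ we have $(a+b)^p\ge a^p+b^p$, with equality iff $ab=0$. Hence $Jg\cdot Jh=0$ a.e., so $J$ preserves disjointness. A positive disjointness-preserving operator is a lattice homomorphism: write $\max(g,h)=g+(h-g)^+$ and $(h-g)^+\perp(h-g)^-$, which gives $J(u^+)=(Ju)^+$ and $J(\max(g,h))=\max(Jg,Jh)$. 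Iterating gives $J(\max_i g_i)=\max_i Jg_i$.

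With this identity,
\[
 \|S^*(Jf)\|_{L^p(\mu)}\ \ge\ \Bigl\|J\bigl(\max_i T_{t_i}f\bigr)\Bigr\|_{L^p(\mu)}=\Bigl\|\max_i T_{t_i}f\Bigr\|_{L^p(\nu)}\ge p'-2\tau,
\]
while $\|Jf\|_{L^p(\mu)}=1$. Letting $\tau\to0$ gives $\|S^*\|\ge p'$, and together with the upper bound this proves the corollary.
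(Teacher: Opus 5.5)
Your proposal is correct and follows the paper's proof. The upper bound comes from Theorem~\ref{thm:stein-maximal}, and the lower bound comes from pushing finite maxima $\max_i T_{t_i}f$ through $J$, using that a positive $L^p$-isometry is a lattice embedding together with $S_tJ=JT_t$. The only additions are these: you prove the lattice-embedding property, which the paper merely asserts, by strict superadditivity of $s\mapsto s^p$ on disjointly supported functions (this needs only a single exponent $p>1$, not two as you announce), and you make the countable reduction of the essential supremum explicit.
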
 
\begin{proof}  By Theorem \ref{thm:stein-maximal}, only the lower bound remains to be proven.
Since \(J\) is a positive \(L^p\)-isometry, it is a lattice embedding. Note that, for every $f\in L^p(Y)$ with $f\ge0$ and all
$t_1,\ldots,t_N>0$, by positivity and preservation of finite maxima of $J$, we have
\[
J\Big( \sup_{1 \le i \le N} T_{t_i} f \Big)= \sup_{1 \le i \le N} JT_{t_i} f= \sup_{1 \le i \le N} S_{t_i} Jf. 
\]
Therefore, since $J$ is an $L^p$-isometry, we get
\[
\Big\| \sup_{1 \le i \le N} T_{t_i} f \Big\|_{L^p(Y)} \le \|S^*(Jf) \|_{L^p(X)} \le  \|S^* \|_{L^p(X) \to L^p(X)} \|f \|_{L^p(Y)}. 
\]
Since $N$ and $t_1,\dots,t_N$ are arbitrary, this yields
\[
\|T^*f\|_{L^p(Y)} \le \|S^*\|_{L^p(X) \to L^(X)} \|f \|_{L^p(Y)}.
\]
Using positivity and the arbitrariness of $f\ge0$, we obtain
\[
p'=\|T^*\|_{L^p(Y) \to L^p(Y)} \le \|S^* \|_{L^p(X) \to L^p(X)},
\]
which finishes the proof.
\end{proof}

 The previous corollary applies in particular to infinite-dimensional semigroups arising as infinite tensor products of Markov semigroups acting on probability spaces and satisfying the hypotheses of Theorem \ref{thm:local-compactness}.
\begin{corollary}
 For each $i \ge 1$, let $(T_t^{(i)})_{t \ge 0}$ be a Markov semigroup satisfying the hypotheses of Theorem \ref{thm:local-compactness} and
acting on a probability space $(X_i, \mathcal{F}_i, \mu_i)$. Set
\[
X=\prod_{i=1}^\infty X_i, \quad \mu=\bigotimes_{i=1}^\infty \mu_i, \quad S_t= \bigotimes_{i=1}^\infty T_t^{(i)}.
\]
Then, for every $1<p<\infty$, 
\[
\|S^*\|_{L^p(X,\mu) \to L^p(X,\mu)}=p'.
\]
\end{corollary}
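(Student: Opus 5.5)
The plan is to apply the preceding corollary with $(Y,\nu)=(X_1,\mu_1)$ and $(T_t)=(T^{(1)}_t)$, which satisfies the hypotheses of Theorem~\ref{thm:local-compactness} by assumption. The upper bound $\|S^*\|\le p'$ is Theorem~\ref{thm:stein-maximal}. This requires that $(S_t)$ be a strongly continuous positive symmetric sub-Markovian (indeed Markov) semigroup on the probability space $(X,\mu)$, so the lower bound reduces to constructing the embedding $J$.

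First, the infinite tensor product semigroup must be made sense of. On cylinder functions $f=f_1\otimes\cdots\otimes f_n\otimes\one\otimes\cdots$ define
\[
S_tf=T^{(1)}_tf_1\otimes\cdots\otimes T^{(n)}_tf_n\otimes\one\otimes\cdots,
\]
which is consistent because each $T^{(i)}_t\one=\one$; this is exactly where the Markov hypothesis is needed. On $L^2(\mu)$ the operator $S_t$ is the infinite tensor product of self-adjoint contractions fixing the unit vector $\one$, hence a self-adjoint contraction. Positivity and $S_t\one=\one$ hold on the finite products $\bigotimes_{i\le n}T^{(i)}_t\otimes \Id$, which are positive Markov operators, and these converge strongly to $S_t$. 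The $L^1$ and $L^\infty$ contractivity follow by positivity and the Markov property, via duality using symmetry. Strong continuity on $L^2$ is checked on cylinder functions, where it reduces to finitely many factors, and then extended by density together with the uniform bound $\|S_t\|\le1$. I expect this construction to be the main technical obstacle, in particular verifying that the finite products converge, for which I would use the density of cylinder functions and the martingale convergence $\mathbb E(\,\cdot\mid \mathcal F_1\otimes\cdots\otimes\mathcal F_n)\to \Id$.

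Next, define $Jf(x)=f(x_1)$ for $f\in L^0(X_1)$, that is, $Jf=f\otimes\one\otimes\one\otimes\cdots$. This map is positive, linear and injective on equivalence classes. It is an isometry of $L^p(\mu_1)$ into $L^p(\mu)$ for every $p$, since the $\mu$-marginal on the first coordinate is $\mu_1$. The intertwining relation follows directly from the definition on cylinder functions:
\[
S_tJf=T^{(1)}_tf\otimes T^{(2)}_t\one\otimes\cdots=JT^{(1)}_tf,
\]
again using $T^{(i)}_t\one=\one$. By density and boundedness this extends to all $f\in L^p(\mu_1)$.

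With these facts, the previous corollary yields $\|S^*\|_{L^p(\mu)\to L^p(\mu)}=p'$ for all $1<p<\infty$. Only the first factor's compactness hypothesis is actually used; the hypotheses on the other factors serve only to guarantee that the product semigroup is well defined and Markov.
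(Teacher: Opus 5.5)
Your proposal is correct and follows the route the paper intends. The paper gives no separate proof and only notes that the preceding corollary applies; your choice $Jf=f\otimes\one\otimes\one\otimes\cdots$, with the intertwining $S_tJ=JT^{(1)}_t$ (which uses $T^{(i)}_t\one=\one$ for $i\ge2$), is the natural way to apply it, and your check that the infinite tensor product is a strongly continuous positive symmetric Markov semigroup supplies details the paper leaves implicit.
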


\begin{remark}
Let $1<p<\infty$, and let $(S_t)_{t\ge0}$ be the
infinite-dimensional Ornstein--Uhlenbeck semigroup on
$L^p(\R^{\mathbb{N}},\gamma^{\otimes\mathbb{N}})$,
where $\gamma$ is the standard Gaussian measure on $\R$.

This semigroup is given by the Mehler formula
\[
S_t f(x)
=
\int_{\R^{\mathbb{N}}}
f\bigl(e^{-t}x+\sqrt{1-e^{-2t}}\,y\bigr)
\,d\gamma^{\otimes\mathbb{N}}(y),
\qquad t>0.
\]
For the underlying Gaussian product space and the basic properties
of this semigroup, see \cite{Bogachev1998}.
As before, set
\[
S^\ast f=\esssup_{t>0}S_t|f|.
\]
Since the semigroup $(S_t)_{t\ge0}$ is the infinite tensor product
of the one-dimensional Ornstein--Uhlenbeck semigroups, the preceding corollary yields
    $$\|S^\ast\|_{L^p(X)\rightarrow L^p(X)}=p'~.$$
\end{remark}

\subsection{Hardy-Littlewood maximal functions}

 In this subsection we shall apply Theorem \ref{thm:local-compactness} to deduce Corollaries \ref{cor:HL-norm-R} and \ref{cor:ball-lower-bounds}.
In dimension one, the upper bound follows from the corresponding bounds for the two one-sided maximal operators.  For
\(F\in L^p(\mathbb R)\), define
\[
 M^+F(x)=\sup_{r>0}\frac1r\int_0^r|F(x+y)|\,\dd y,
 \qquad
 M^-F(x)=\sup_{r>0}\frac1r\int_0^r|F(x-y)|\,\dd y.
\]
Let us recall a classical bound for $L^p$ norms of the operators defined above.
\begin{lemma}[{\cite[Theorem 21.76(i)]{HewittStromberg}}]
\label{lem:one-sided-sharp}
For every $1<p<\infty$, every $F\in L^p(\mathbb R)$, and either choice of sign, we have
\[
 \|M^\pm F\|_{L^p(\mathbb R)}\leq p'\|F\|_{L^p(\mathbb R)}.
\]
\end{lemma}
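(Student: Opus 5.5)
The statement to prove is the one-sided Hardy--Littlewood inequality $\|M^\pm F\|_p\le p'\|F\|_p$. I would prove it by the rising sun lemma of F.~Riesz, which gives a weak-type estimate with constant one. Combining that estimate with the layer-cake formula yields exactly $p'$. By reflection $x\mapsto -x$ it suffices to treat $M^+$, and we may assume $F\ge0$.

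\emph{Step 1: distributional estimate.} The first goal is the following refined inequality: for every $\lambda>0$,
\[
 |\{M^+F>\lambda\}|\le \frac1\lambda\int_{\{M^+F>\lambda\}}F(y)\,\dd y .
\]
Put $G(x)=\int_0^x F-\lambda x$, which is continuous. Then $M^+F(x)>\lambda$ if and only if $G(x+r)>G(x)$ for some $r>0$, i.e.\ $x$ is a "shadow" point for the rising sun lemma. The set $O_\lambda=\{M^+F>\lambda\}$ is open, since it is a union over $r$ of sets of continuity points. Assume first that $F\in L^1\cap L^p$. Then $G(x)\to-\infty$ as $x\to+\infty$, so every component interval $(a,b)$ of $O_\lambda$ is bounded. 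The rising sun lemma gives $G(a)=G(b)$ on each component, that is, $\int_a^b F=\lambda(b-a)$. Summing over the components gives the claim with equality, and in particular with the integral restricted to $O_\lambda$. For general $F\in L^p$, I would apply the result to $F\one_{[-R,R]}$ and let $R\to\infty$ by monotone convergence, since $M^+$ is monotone in $F$ and $M^+(F\one_{[-R,R]})\uparrow M^+F$.

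\emph{Step 2: layer cake.} Using Step 1 and Fubini,
\[
 \|M^+F\|_p^p=p\int_0^\infty\lambda^{p-1}|O_\lambda|\,\dd\lambda
 \le p\int_0^\infty\lambda^{p-2}\int_{O_\lambda}F\,\dd\lambda
 =\frac{p}{p-1}\int F\,(M^+F)^{p-1}.
\]
Hölder's inequality bounds the right-hand side by $p'\|F\|_p\|M^+F\|_p^{p-1}$. Dividing gives the result, provided $\|M^+F\|_p<\infty$.

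\emph{Main obstacle.} The delicate points are the rigorous rising sun argument and the a priori finiteness needed for the division. For the rising sun step, at an endpoint $a$ of a component we have $a\notin O_\lambda$, so $G(a)\ge G(y)$ for all $y>a$, which gives $G(b)\le G(a)$. Conversely, if $G(b)<G(a)$, then by continuity points of $(a,b)$ near $a$ would have $G$-values exceeding $G(b)$. Taking the point where $G$ attains its maximum on $[a,b]$, which is interior, that point would not lie in $O_\lambda$, a contradiction. For finiteness, I would first take $F$ bounded with compact support. Then $M^+F$ lies in $L^\infty$ and decays like $1/|x|$ to the left, and vanishes to the right of the support, so it is in $L^p$ for $p>1$. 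The general case then follows by monotone convergence.
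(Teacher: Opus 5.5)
The paper gives no proof of this lemma; it only cites Hewitt--Stromberg. Your route is the classical proof of this sharp inequality, going back to Hardy--Littlewood and F.~Riesz: the rising sun lemma gives the refined weak-type bound $|\{M^+F>\lambda\}|\le\lambda^{-1}\int_{\{M^+F>\lambda\}}F$, and then you apply the layer-cake formula, Fubini and H\"older. The surrounding reductions are sound: reflection, truncation to $F\one_{[-R,R]}$ with monotone convergence, and a priori finiteness for bounded compactly supported $F$ via the $1/|x|$ decay. You also correctly keep the integral restricted to the level set, which is exactly what produces the constant $p'$.

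One step in your sketch of the rising sun lemma is wrong as written, and it is the direction you actually need. That direction is $G(b)\ge G(a)$, i.e.\ $\int_a^bF\ge\lambda(b-a)$. The inequality $G(b)\le G(a)$, which you get from $a\notin O_\lambda$, only upgrades the estimate to an equality.

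Under your own observation that $G(a)\ge G(y)$ for all $y>a$, the maximum of $G$ on $[a,b]$ is attained at $a$. So it is not interior, and no contradiction arises. Repair it as follows.

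Fix $x\in(a,b)$ and let $y^*$ maximize $G$ on $[x,b]$. Suppose $y^*<b$. Then $y^*\in O_\lambda$ gives some $z>y^*$ with $G(z)>G(y^*)$. Maximality forces $z>b$, and then $G(z)>G(y^*)\ge G(b)$ contradicts $b\notin O_\lambda$. Hence $G(x)\le G(b)$ on $(a,b)$, and letting $x\downarrow a$ gives $G(a)\le G(b)$.

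The same bound also handles components of the form $(-\infty,b)$, which your remark about $x\to+\infty$ does not cover. For $F\in L^1$ and $x<0$ one has $G(x)\ge\lambda|x|-\|F\|_1\to\infty$ as $x\to-\infty$, which is incompatible with $G\le G(b)$ on such a component. With this repair the proof is complete.
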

Recall the definition
\[
\mathcal{H}_t^dF=e^{t \Delta}F=F \ast h_t^{d}, \quad \text{where } \ h_t^{d}(x)= \frac{1}{(4 \pi t)^{d/2}} e^{-|x|^2/(4t)}.
\]
Using the standard domination of the heat semigroup maximal operator by the Hardy–Littlewood maximal operator, we can conclude the proof of Corollaries \ref{cor:ball-lower-bounds} and \ref{cor:HL-norm-R}.
\begin{proof}[Proof of Corollaries \ref{cor:ball-lower-bounds} and \ref{cor:HL-norm-R}]
Let $\omega_d=|B_1^d|$ and
\[
\varphi_t(r)= \frac{1}{(4 \pi t)^{d/2}} e^{-r^2/(4t)}.
\]
Note that by polar coordinates and integration by parts we have
\begin{equation} \label{eq: varphi normalization to 1}
    1= \int_{\R^d} h_t(x) dx= d \omega_d \int_0^\infty \varphi_t(r) r^{d-1} dr= - \int_0^\infty \varphi_t'(r) |B_r^d| dr.
\end{equation}
Let $\sigma$ be surface measure on $\mathbb S^{d-1}$, and write
\[
 \mathcal B_u^dF(x)=\frac{1}{|B_u^d|}\int_{B_u^d}F(x-y)\,\dd y.
\]
Using polar coordinates again, we obtain, for any $F\in L^p(\R^d)$, 
\begin{align*}
    \mathcal{H}_t^d F(x)&= \int_{\mathbb S^{d-1}}\int_0^\infty F(x-r \omega) \varphi_t(r) r^{d-1} \dd r \dd \sigma(\omega) \\
   &= - \int_{0}^\infty \varphi_t'(u)\int_{\mathbb S^{d-1}}\int_{0}^u F(x-r \omega) r^{d-1} \dd r \dd\sigma(\omega)\,\dd u\\
   &= - \int_{0}^\infty \varphi_t'(u) |B_u^d| \cdot \mathcal{B}_u^d F(x) \dd u.
\end{align*}
The above identity combined with \eqref{eq: varphi normalization to 1} and H\"older's inequality gives us 
\[
\|\sup_{t>0} |\mathcal{H}_t^dF| \|_{L^p(\R^d)} \le \|\sup_{t>0} |\mathcal{B}_t^dF| \|_{L^p(\R^d)}.
\]
Since $F \in L^p(\R^d)$ is arbitrary, point (a) of Corollary \ref{cor: semigroup examples} combined with the inequality above concludes the proof of Corollary \ref{cor:ball-lower-bounds}.
For the proof of Corollary \ref{cor:HL-norm-R} it suffices to establish upper bound. Note that we have
\[
 \mathscr B_*^1F\leq\frac12(M^-F+M^+F),
\]
hence Lemma~\ref{lem:one-sided-sharp} gives us
\[
 \|\mathscr B_*^1\|_{L^p(\mathbb R)\to L^p(\mathbb R)}\leq p'
\]
and finishes the proof.
\end{proof}

\appendix

\section{Strict lower bound for the Hardy-Littlewood maximal operator}\label{sec:strict-HL-lower-bound}
We conclude by proving Proposition \ref{prop:HL-norm-Rd}. Before turning to the proof, let us make a few informal comments. The lattice $\Lambda$
used below is chosen to have covolume one and to satisfy $|B_r|>2$
for some $r<\operatorname{dist}(0,\Lambda\setminus\{0\})$.
For $2\le d\le10$, one can simply take $\Lambda=\mathbb Z^d$.
A one-dimensional version of the argument also gives a lower bound
arbitrarily close to $p'$ for the centered Hardy--Littlewood maximal
operator.

The construction is very closely related to a stopped martingale on
$(\{0,1\}^n,\mu^{\otimes n})$, where $\mu(\{0\})=\delta$ and
$\mu(\{1\})=1-\delta$. Starting from $1$, it multiplies by $a$ at
each $1$, and by $b$ at the first $0$, after which it stops.
With the parameters chosen below, this martingale nearly attains
equality in Doob's $L^p$ inequality as first $n\to\infty$ and then
$\delta\to0$. Taking periodic extensions, with dilation by $L$
at each step, allows us to mimic this construction in $\mathbb R^d$,
but with $\delta$ fixed. The condition $|B_r|>2$ gives us an
additional gain, allowing us to exceed $p'$. 
\begin{proof}[Proof of Proposition~\ref{prop:HL-norm-Rd}]
We construct a nonnegative $f\in C_c^\infty(\mathbb R^d)$ such that
$\|\mathscr B_*^d f\|_p>p'\|f\|_p$. Fix $d\ge2$ and $1<p<\infty$. By the Minkowski--Hlawka theorem (see \cite{Cassels} or \cite[Satz~4, p.~298]{Hlawka}), there are a lattice $\Lambda$ of
covolume one and $r>0$ such that
\[
 |B_r|>2,
 \qquad \dist(0,\Lambda\setminus\{0\})>r.
\]
Let $Q$ be a half-open fundamental parallelepiped of $\Lambda$ with
a vertex at the origin, so that $|Q|=1$. Choose an integer $k\ge2$
such that $E=k^{-1}Q$ satisfies
\[
 \operatorname{diam}(E)
 <\min\{r,\dist(0,\Lambda\setminus\{0\})-r\}.
\]
The triangle inequality gives
\[
 (x+B_r)\cap(E+\Lambda)=E\qquad(x\in E),
\]
where $E+\Lambda=\bigcup_{\lambda\in\Lambda}(E+\lambda)$.
Throughout the proof, functions on $Q$ are extended
$\Lambda$-periodically.

Put $\delta=|E|=k^{-d}$ and set
\[
 a=(1-\delta)^{-1/p},
 \qquad
 b=\frac{1-(1-\delta)a}{\delta}.
\]
Then $0<b<1<a$ and
\[
 (1-\delta)a+\delta b=1,
 \qquad (1-\delta)a^p=1.
\]
Fix an integer $n\ge1$ and $L$, which is a positive
multiple of $k$. Define on $Q$
\[
 f_0=1,
 \qquad
 f_{j+1}(x)=b\one_E(x)
       +a\one_{Q\setminus E}(x)f_j(Lx),
 \qquad 0\le j<n.
\]

In particular,
\[
 f_1=b\one_E+a\one_{Q\setminus E}.
\]
\begin{figure}[!htbp]
\centering
\resizebox{\linewidth}{!}{%
\begin{tikzpicture}[
    x=1cm,y=1cm,font=\small,
    frame/.style={densely dashed,draw=black!65,line width=.45pt},
    lead/.style={-{Stealth[length=1.15mm]},line width=.4pt},
    heading/.style={anchor=west,font=\small\bfseries},
    patch/.style={fill=Bfill,draw=Bedge,line width=.2pt},
    finepatch/.style={fill=ABfill,draw=ABedge,line width=.15pt}
]
\path[use as bounding box] (0,0) rectangle (15.25,5.7);
\node[heading] at (.05,5.4) {(a)\quad First step: $f_1$};
\node[heading] at (5.18,5.4) {(b)\quad Second step: $f_2$};
\node[heading] at (10.33,5.4) {(c)\quad Enlarged neighborhood};

\begin{scope}[shift={(2.20,2.58)},x=1.75cm,y=1.75cm]
  \foreach \i in {-1,0,1}{
    \foreach \j in {-1,0,1}{
      \path[patch] (\i,\j) rectangle ++(.1,.1);
    }
  }
  \draw[frame] (0,0) rectangle (1,1);
  \node at (.85,.87) {$Q$};
  \draw[line width=.65pt] (.05,.05) circle[radius=.85];
  \draw[line width=.4pt] (.05,.05)--(.65104,.65104)
     node[pos=.67,above left,inner sep=1pt] {$r$};
  \fill (.05,.05) circle[radius=.012];
  \node[anchor=north west,inner sep=1pt] at (.10,.02) {$x$};
  \node at (.39,-.45) {$a$};
  \node[anchor=east,inner sep=1pt,text=Bedge] at (-.08,.35) {$b$ on $E$};
  \draw[lead,draw=Bedge] (-.08,.29)--(.015,.09);
\end{scope}
\node[align=center] at (2.28,.25)
  {$x\in E$};

\begin{scope}[shift={(5.18,.77)},x=4.12cm,y=4.12cm]
  \path[patch] (0,0) rectangle (.1,.1);
  \foreach \i in {0,...,19}{
    \foreach \j in {0,...,19}{
      \pgfmathtruncatemacro{\outside}{(\i>=2)||(\j>=2)}
      \ifnum\outside=1
        \path[finepatch] ({\i/20},{\j/20}) rectangle ++(.005,.005);
      \fi
    }
  }
  \draw[line width=.5pt] (0,0) rectangle (1,1);
  \node[anchor=south east,inner sep=2pt] at (1,1) {$Q$};
  \draw[line width=.45pt] (.5025,.5025) circle[radius=.0425];
  \draw[draw=black!70,line width=.4pt] (.44,.44) rectangle (.5625,.5625);
  \coordinate (zoomcorner) at (.5625,.5625);
  \coordinate (originalE) at (.05,.05);
\end{scope}
\node[anchor=north,inner sep=1pt,text=Bedge] at (5.386,.73) {$E$};
\path[patch] (6.15,.13) rectangle ++(.14,.14);
\node[anchor=west,inner sep=2pt,text=Bedge] at (6.31,.20) {$b$ on $E$};
\path[finepatch] (7.73,.13) rectangle ++(.07,.07);
\node[anchor=west,inner sep=2pt,text=ABedge] at (7.84,.20) {$ab$};
\draw[line width=.3pt] (8.62,.13) rectangle ++(.14,.14);
\node[anchor=west,inner sep=2pt] at (8.80,.20) {$a^2$};

\begin{scope}[shift={(12.55,2.58)},x=1.75cm,y=1.75cm]
  \foreach \i in {-1,0,1}{
    \foreach \j in {-1,0,1}{
      \path[finepatch] (\i,\j) rectangle ++(.1,.1);
    }
  }
  \draw[frame] (0,0) rectangle (1,1);
  \draw[line width=.65pt] (.05,.05) circle[radius=.85];
  \draw[line width=.4pt] (.05,.05)--(.65104,.65104)
     node[pos=.67,above left,inner sep=1pt] {$r/L$};
  \fill (.05,.05) circle[radius=.012];
  \node[anchor=north west,inner sep=1pt] at (.10,.02) {$y$};
  \node at (.39,-.45) {$a^2$};
  \node[anchor=east,inner sep=1pt,text=ABedge] at (-.29,.24) {$ab$};
  \draw[lead,draw=ABedge] (-.28,.18)--(.015,.075);
  \coordinate (zoomleft) at (-1.20,1.25);
\end{scope}
\draw[lead,draw=black!60] (zoomcorner) -- (9.65,4.75) -- (zoomleft);
\node[align=center] at (12.59,.25)
  {$\operatorname{dist}(y,E+\Lambda)>r/L$};
\end{tikzpicture}%
}
\caption{Visualization of the first two steps of the construction for $\Lambda=\mathbb Z^2$.}
\label{fig:lattice-construction}
\end{figure}
We suppress the dependence of $f_j$ on $L$. For every
$\Lambda$-periodic $h\in L^1(Q)$, a change of variables gives
\begin{equation}\label{eq:appendix-cell-identity}
 \int_{L^{-1}(Q+\lambda)}h(Lx)\,\dd x
 =L^{-d}\int_Qh(x)\,\dd x,
 \qquad \lambda\in\Lambda.
\end{equation}
Since $L$ is a multiple of $k$, both $E$ and $Q\setminus E$ are
unions of these cells. Induction therefore gives
\begin{align*}
 \int_Qf_{j+1}(x)\,\dd x
 &=\delta b+a(1-\delta)\int_Qf_j(x)\,\dd x=1,\\
 \int_Qf_{j+1}(x)^p\,\dd x
 &=\delta b^p+a^p(1-\delta)\int_Qf_j(x)^p\,\dd x\\
 &=\delta b^p+\int_Qf_j(x)^p\,\dd x.
\end{align*}
Consequently,
\begin{equation}\label{eq:appendix-moments}
 \int_Qf_j(x)^p\,\dd x=1+j\delta b^p,
 \qquad 0\le f_j\le a^n,
 \qquad 0\le j\le n.
\end{equation}

For $x\in E$, the choice of $E$ and the identity
$\delta(a-b)=a-1$ give
\[
 \frac{1}{|B_r|}\int_{x+B_r}f_1(y)\,\dd y
 =a-\frac{\delta(a-b)}{|B_r|}
 =a-\frac{a-1}{|B_r|}=:m.
\]
Since $|B_r|>2$, we have $m>(a+1)/2$. Strict convexity of
$t\mapsto t^{-1/p}$ gives
\[
 p'b
 =\frac1\delta\int_{1-\delta}^{1}t^{-1/p}\,\dd t
 <\frac{a+1}{2}<m.
\]

To pass from periodic functions to compactly supported functions,
we first restrict the averaging radii. For nonnegative
$h\in L^1_{\mathrm{loc}}(\R^d)$, write
\[
 \mathscr B_{*,r}^dh(x)
 =\sup_{0<t\le r}\frac{1}{|B_t|}
       \int_{x+B_t}h(y)\,\dd y.
\]
Then $\mathscr B_{*,r}^df_j$ is $\Lambda$-periodic and
$0\le\mathscr B_{*,r}^df_j\le a^n$. In what follows, $C_n$ may
depend on $n$ and on the fixed parameters, but not on $L$, $x$, or
$j\in\{0,\ldots,n-1\}$. Its value may change from line to line.

On each cell $L^{-1}(Q+\lambda)$, the function $f_{j+1}-f_1$
equals either zero or $a(f_j(L\,\cdot)-1)$, and hence has integral
zero. Thus, in its integral over $x+B_r$, only cells whose closures
meet $\partial(x+B_r)$ contribute. Their union is contained in
\[
 \left\{y\in\R^d:\bigl||y-x|-r\bigr|
       \le\frac{\operatorname{diam}(Q)}L\right\},
\]
whose measure is at most $C_n/L$, uniformly in $x$. Since
$|f_{j+1}-f_1|\le2a^n$, it follows that
\[
 \sup_{x\in E}
 \left|\frac{1}{|B_r|}\int_{x+B_r}f_{j+1}(y)\,\dd y-m\right|
 \le\frac{C_n}{L}.
\]
In particular,
\[
 \int_E(\mathscr B_{*,r}^df_{j+1})^p\,\dd x
 \ge\delta m^p-\frac{C_n}{L}.
\]
On the other hand, if $\dist(x,E+\Lambda)>r/L$, then, for
$0<t\le r$, we have (see Figure~\ref{fig:lattice-construction})
\[
 \frac{1}{|B_{t/L}|}\int_{x+B_{t/L}}f_{j+1}(y)\,\dd y
 =\frac{a}{|B_t|}\int_{Lx+B_t}f_j(y)\,\dd y.
\]
Taking the supremum in $t$ gives
\[
 \mathscr B_{*,r}^df_{j+1}(x)
 \ge a(\mathscr B_{*,r}^df_j)(Lx).
\]
Moreover,
\[
 \left|\left\{x\in Q\setminus E:
       \dist(x,E+\Lambda)\le r/L\right\}\right|
 \le\frac{C_n}{L},
\]
because this set lies within distance $r/L$ of finitely many faces
of translates of $E$. We estimate the contribution of this set using
$\mathscr B_{*,r}^df_j\le a^n$. Applying
\eqref{eq:appendix-cell-identity} to
$(\mathscr B_{*,r}^df_j)^p$ and using $a^p(1-\delta)=1$, we obtain
\begin{align*}
 \int_Q(\mathscr B_{*,r}^df_{j+1})^p\,\dd x
 &\ge\delta m^p
      +a^p\int_{Q\setminus E}(\mathscr B_{*,r}^df_j)(Lx)^p\,\dd x
      -\frac{C_n}{L}\\
 &=\delta m^p+\int_Q(\mathscr B_{*,r}^df_j)^p\,\dd x
      -\frac{C_n}{L}.
\end{align*}
Summing over $j=0,\ldots,n-1$ and using
$\mathscr B_{*,r}^df_0=1$ gives
\[
 \int_Q(\mathscr B_{*,r}^df_n)^p\,\dd x
 \ge1+n\delta m^p-\frac{C_n}{L}.
\]
Since $m>p'b$,
\[
 \lim_{n\to\infty}\frac{1+n\delta m^p}{1+n\delta b^p}
 =\left(\frac mb\right)^p>(p')^p.
\]
Together with \eqref{eq:appendix-moments}, this allows us to choose
first $n$ and then $L$ large enough so that
\[
 \|\mathscr B_{*,r}^df_n\|_{L^p(Q)}
 >p'\|f_n\|_{L^p(Q)}.
\]
Fix these choices of $n$ and $L$.

It remains to obtain a smooth compactly supported test function.
For a positive integer $N$, put $u=f_n\one_{NQ}$, where
$NQ=\{Nx:x\in Q\}$. Since $NQ$ is the disjoint union of $N^d$
translates of $Q$,
\[
 \|u\|_p^p=N^d\int_Qf_n(x)^p\,\dd x.
\]
If $x\in NQ$ and $\dist(x,\partial(NQ))>r$, then
$\mathscr B_{*,r}^du(x)=\mathscr B_{*,r}^df_n(x)$.
The remaining part of $NQ$ has measure at most $C_nN^{d-1}$.
Periodicity and $\mathscr B_{*,r}^df_n\le a^n$ therefore give
\[
 \|\mathscr B_{*,r}^du\|_p^p
 \ge N^d\int_Q(\mathscr B_{*,r}^df_n)^p\,\dd x
       -C_nN^{d-1}.
\]
The strict inequality on $Q$ now yields, for all sufficiently
large $N$,
\[
 \|\mathscr B_{*,r}^du\|_p>p'\|u\|_p.
\]
Fix such an $N$. By approximating $u$ arbitrarily well in
$L^p(\mathbb R^d)$ by smooth compactly supported functions and
using the boundedness of the maximal operator, we obtain the
same strict inequality for some smooth compactly supported function.
\end{proof}

\microtypesetup{expansion=false}

\end{document}